\renewcommand{\orcid}[1]{\href{https://orcid.org/#1}{\textcolor[HTML]{A6CE39}{orcid.org/#1}}}
\setlist[enumerate]{leftmargin=.5in}
\setlist[itemize]{leftmargin=.5in}
\crefname{hypothesis}{Hypothesis}{Hypotheses}
\title{Stable high-order cubature formulas for experimental data
\thanks{
\monthyeardate\today 
\corresponding{Jan Glaubitz (\email{Jan.Glaubitz@Dartmouth.edu}, \orcid{0000-0002-3434-5563})} 
\funding{This work was partially supported by AFOSR \#F9550-18-1-0316 and ONR \#N00014-20-1-2595.}
}}
\author{Jan Glaubitz\thanks{Department of Mathematics, Dartmouth College, Hanover, NH 03755, USA}
}
\DeclareMathOperator{\diag}{diag}
\DeclareMathOperator*{\argmin}{arg\,min} 
\newcommand{\scp}[2]{\left\langle{#1,\, #2}\right\rangle} 
\newcommand{\intd}{\, \mathrm{d}}
\newcommand{\N}{\mathbb{N}}
\newcommand{\R}{\mathbb{R}}
\begin{document}

\maketitle

\begin{abstract}
	In many applications, it is impractical---if not even impossible---to obtain data to fit a known cubature formula (CF). 
Instead, experimental data is often acquired at equidistant or even scattered locations. 
In this work, stable (in the sense of nonnegative only cubature weights) high-order CFs are developed for this purpose. 
These are based on the approach to allow the number of data points $N$ to be larger than the number of basis functions $K$ which are integrated exactly by the CF. 
This yields an $(N-K)$-dimensional affine linear subspace from which cubature weights are selected that minimize certain norms corresponding to stability of the CF. 
In the process, two novel classes of stable high-order CFs are proposed and carefully investigated. 
\end{abstract}

\begin{keywords}
	Numerical integration, stable high-order cubature, experimental data, least squares, discrete orthogonal polynomials, $\ell^1$ minimization
\end{keywords}

\begin{AMS}
	65D30, 65D32, 41A55, 41A63, 42C05
\end{AMS}

\section{Introduction} 
\label{sec:introduction} 

Numerical integration  is an omnipresent problem in mathematics and myriad other scientific areas. 
In fact, measuring areas and volumes dates back at least to the ancient Babylonians and Egyptians \cite{boyer2011history}. 
The present work is concerned with the determination---approximately or exactly---of integrals over regions in two or more dimensions. 
This problem was first studied systematically by Maxwell in 1877 \cite{maxwell1877approximate} and is today known as \emph{cubature}.

Let $\Omega \subset \R^q$ be a bounded domain with positive volume, $|\Omega| > 0$. 
Given are $N$ distinct data pairs $\{ (\mathbf{x}_n,f_n) \}_{n=1}^N \subset \Omega \times \R$ with $f: \Omega \to \R$ and $f_n := f(\mathbf{x}_n)$.
The aim is to approximate the weighted integral 
\begin{equation}\label{eq:I}
  I[f] := \int_\Omega f(\boldsymbol{x}) \omega(\boldsymbol{x}) \intd \boldsymbol{x}
\end{equation}
with nonnegative weight function $\omega$ (assumed to be integrable) by an \emph{$N$-point CF} 
\begin{equation}\label{eq:CR}
  C_N[f] = \sum_{n=1}^N w_n f(\mathbf{x}_n).
\end{equation} 
Here, the distinct points $\{ \mathbf{x}_n \}_{n=1}^N$ are called \emph{data points} and the $\{ w_n \}_{n=1}^N$ are called \emph{cubature weights}.
A sequence of CFs $(C_N)_{N \in \N}$ is called a \emph{cubature rule (CR)}.

In one dimension ($q=1$) the construction of CFs---usually referred to as \emph{quadrature formulas (QFs)}---is dominated by the idea of interpolating the data (by a polynomial) and exactly integrating the polynomial then. 
Equidistant data points lead to Newton--Cotes, Chebyshev points to Clenshaw--Curtis and roots of Jacobi polynomials to Gauss--Jacobi rules \cite{clenshaw1960method,gautschi1997numerical,krylov2006approximate,davis2007methods,trefethen2008gauss,brass2011quadrature,glaubitz2020shock}.
These QFs can also be used to construct CFs for certain higher-dimensional domains ($q > 1$) and weight functions, resulting in (generalized) Cartesian product rules \cite{davis2007methods}. 

Other approaches to construct CFs include minimal CFs, that are designed to be exact for (algebraic or trigonometric) polynomials of high degree using as few data points as possible; minimum-norm CFs, which are based on the idea to minimize the norm of the cubature error considered as a linear functional; and number-theoretical CFs, essentially derived from the ideas of Diophantine approximation and equidistribution modulo $1$. 
We refer to a rich body of literature 
\cite{haber1970numerical,stroud1971approximate,cools1997constructing,krommer1998computational,cools2003encyclopaedia,davis2007methods}
and references therein. 
Of course, this list is by no means exhaustive.
Another important class of CFs is Monte Carlo (MC) and quasi-Monte Carlo (QMC) methods 
\cite{metropolis1949monte,caflisch1998monte,dick2013high}. 
In these, the data points are random samples (uniformly distributed over $\Omega$) or correspond to partially or fully deterministic low-discrepancy sequences.\footnote{These are supposed to enhance uniformity of the data points.}
All of these methods have their own advantages and disadvantages. 
Yet, it should be stressed that most of the above formulas require a specific distribution of the data points.

There are also several optimization strategies for computing high-order CFs \cite{taylor2000algorithm,taylor2007cardinal,ryu2015extensions,jakeman2018generation,keshavarzzadeh2018numerical}. 
These strategies typically rely on the ability to have full flexibility in placing data points inside the domain $\Omega$.

In many applications, however, it is impractical---if not even impossible---to obtain data to fit a known CF \cite{wilson1970discrete,hoge1980oil,reeger2020approximate}. 
For instance, experimental measurements are often performed at equidistant or even scattered locations. 
Furthermore, in some applications, numerical integration is a follow-up to some other task (e.\,g.\ numerically solving PDEs). 
In such a situation, it is not reasonable to require data points that are specific to a certain CF.
The present work is therefore concerned with the construction of stable and high-order CF for general sets of data points.

At least in one dimension ($q=1$), some first steps towards such a goal have already been discussed in 1970 by Wilson. 
In \cite{wilson1970discrete}, he proposed to construct stable high-order QFs by allowing the number of data points $N$ to be larger than the desired degree of exactness (DoE) $d$. 
This yields an underdetermined least squares (LS) problem. 
While Wilson referred to the resulting QFs as \emph{nearest point QFs}, in later works \cite{huybrechs2009stable,glaubitz2020shock,glaubitz2020stable}, the name \emph{LS-QFs} was coined. 
Focusing on the constant weight function $\omega \equiv 1$ and equidistant data points, it was shown in \cite{wilson1970necessary} that stability of LS-QFs (in the sense of nonnegative only  cubature weights) can be ensured essentially by choosing $N = (d+1)^2$. 
In the process, Wilson utilized a beautiful connection between stable QFs and discrete orthogonal polynomials (DOPs). 
The connection between (Gaussian) QFs and continuous orthogonal polynomials, e.\,g., the Legendre polynomials, is well known. 
In contrast, the interplay between QFs and DOPs was---to the best of the author's knowledge---only developed further nearly 40 years later in  \cite{huybrechs2009stable,glaubitz2020shock,glaubitz2020stableQRs}. 
In \cite{huybrechs2009stable} and \cite[Chapter 4]{glaubitz2020shock} the original works of Wilson \cite{wilson1970discrete,wilson1970necessary} were revisited and it was shown that stability of these rules also holds for more general positive weight functions and scattered (not necessarily equidistant) data points. 
An application of these QFs to numerical PDEs was explored in \cite{glaubitz2020stable}.
Furthermore, in \cite{glaubitz2020stableQRs}, the idea of LS-QFs was utilized to construct stable QFs even for general weight functions (potentially having mixed signs).\footnote{It should be pointed out that in this case stability holds in a weaker sense than compared to positive weight functions. In particular, it was argued in \cite{glaubitz2020stableQRs} that one should distinguish between stability and sign-consistency of QFs for general weight functions.}
Yet, to this date, no extension of these QFs to higher dimensions ($q>1$) has been discussed. 

The present work aims to fill this gap in the literature and to construct stable high-order CFs for experimental data in two and more dimensions. 
These CFs assume a fixed set of data points as input and then strive to provide a stable numerical integration procedure. 
Moreover, the DoE of this numerical integration procedure is, in a certain sense, as high as possible. 
In particular, the proposed CFs satisfy a list of properties that are universally considered to be highly desirable \cite{davis1967construction,haber1970numerical,cools2001cubature,krylov2006approximate,brass2011quadrature,van2020adaptive}: 

\begin{enumerate} 
	\item[(P1)] 
	The data points lie inside the integration domain $\Omega$. 
	
	\item[(P2)]  
	The cubature weights are all nonnegative.
	
	\item[(P3)] 
	The CF has a high (or even optimal) DoE for fixed data points.
	
\end{enumerate}

In what follows, two different methods to achieve this goal for fairly general bounded domains $\Omega \subset \R^q$ are proposed. 
The first method transfers the idea of LS-QFs to higher dimensions. 
This approach is discussed in \S \ref{sub:LS-CFs} and the resulting CFs will be called \emph{LS-CFs}. 
They are essentially based on selecting a weighted LS solution from the solution space of the underdetermined linear system of exactness conditions \eqref{eq:ex-cond-system}. 
The second method, on the other hand, is based on finding a least-absolute-values ($\ell^1$) solution. 
This method is presented in \S \ref{sub:l1-CFs} and the resulting CFs will be referred to as \emph{$\ell^1$-CFs}. 
These can provide higher DoE than LS-CFs (for the same set of data points). 
At the same time, however, they can be expected to be computationally more expensive. 

It should be pointed out that there are strong connections between the present manuscript and some other recent works \cite{migliorati2018stable,van2020generating,van2020adaptive}. 
In \cite{migliorati2018stable} high-order CFs were constructed for independent random data points. 
These randomized CFs were moreover shown to have positive weights with a high probability if the number of (random) data points is sufficiently larger than the DoE. 
The results presented in \S \ref{sec:theoretical} of the present manuscript are of a similar flavor. 
We do not restrict ourselves to independent random data points, however, and the proposed formulas are always ensured to have nonnegative weights. 
In \cite{van2020generating,van2020adaptive}, the authors are concerned with the construction (and application to Bayesian prediction) of nested positive CFs. 
Yet, it should be noted that in these works exactness of the CFs is not understood w.\,r.\,t.\ to the continuous integral as in \eqref{eq:I} but a discrete approximation of the form ${I^{(K)}[f] = \frac{1}{K+1} \sum_{k=0}^K f(\mathbf{y}_k)}$ with $K \gg N$. 
If the samples $\mathbf{y}_k$ are drawn from an appropriate distribution, $I^{K}[f]$ approximates $I[f]$. 
Still, it is assumed that a large number of samples can be determined fast and efficiently. 
In fact, it might be argued that the method discussed in \cite{van2020generating,van2020adaptive} is closer to the technique of subsampling \cite{wilson1969general,seshadri2017effectively,van2017non,piazzon2017caratheodory,piazzon2017caratheodoryLS,bos2019catchdes} than the construction of CFs as discussed here.  
A more detailed comparison of the proposed method to construct stable high-order CFs to other existing methods can be found in \S \ref{sec:connection}. 

Finally, it should be stressed that if the reader is only interested in evaluating some integral \eqref{eq:I} and is \emph{not} constrained to specific data points---the function values can be obtained at any desired location---there are certainly other CFs available for this purpose in most cases. 
The stable high-order CFs proposed in the present work, on the other hand, will find their greatest utility when it is difficult---or even impossible---to obtain data at locations required for a particular CF. 
The Matlab code corresponding to the methods developed in this work can be found at \cite{glaubitz2020github}. 

The rest of this work is organized as follows. 
\S \ref{sec:what} provides some preliminaries of stability and exactness of CFs. 
The two new classes of stable and high-order CFs are presented in \S \ref{sec:methods}. 
In \S \ref{sec:theoretical}, it is shown that both CFs are nonnegative if a sufficiently large number of (appropriately distributed) data points is used. 
\S \ref{sec:connection} addresses the connection of the two presented CFs to several existing ones. 
Some computational details are offered in \S \ref{sec:constr}. 
In \S \ref{sec:tests}, we demonstrate the performance of the proposed CFs for a series of different numerical tests. 
Finally, concluding thoughts and an outlook to future research is given in \S \ref{sec:summary}. 
\section{What Do We Want? Stability and Exactness} 
\label{sec:what}

In many applications, it is not possible to get exact measurements $\{f_n\}_{n=1}^N$. 
Instead, we are left with \emph{experimental measurements} $\{f_n^\varepsilon\}_{n=1}^N$ with an inherent \emph{data error} (or \emph{measurement error}): 
\begin{equation}
  \norm{ \mathbf{f} - \mathbf{f}^\varepsilon }_\infty \leq \varepsilon
\end{equation}
Here, $\mathbf{f}$ and $\mathbf{f}^\varepsilon$ respectively denote the vectors $(f_1,\dots,f_N)^T$ and 
$(f_1^\varepsilon,\dots,f_N^\varepsilon)^T$. 
Such errors may be round-off or truncation errors (if $f$ is defined analytically), or errors of measurement or experiment when $f$ is determined by a physical process. 
In this case, we do not only have to ensure that the CF $C_N$ is a good approximation of the exact integral $I$. 
In addition, the growth of the data error should be bounded, and as small as possible. 
This can be observed from the following: 
If we estimate the error between the exact integral of $f$ and the result of a CF applied to $f^\varepsilon$, we observe that 
\begin{equation}
  \left| I[f] - C_N[f^\varepsilon] \right| 
    \leq \left| I[f] - C_N[f] \right| + \left| C_N[f] - C_N[f^\varepsilon] \right|.
\end{equation}
To the first term, we refer to as the \emph{approximation error}. 
For the second term, we note that 
\begin{equation}\label{eq:stab-error}
  \left| C_N[f] - C_N[f^\varepsilon] \right| 
  	\leq \varepsilon \kappa(\mathbf{w}), \quad 
	\kappa(\mathbf{w}) := \sum_{n=1}^N |w_n|.
\end{equation} 
Thus, the second term is bounded by the data error $\varepsilon$ times an amplification factor $\kappa(\mathbf{w})$ which depends on the cubature weights.\footnote{Note that \eqref{eq:stab-error} relates to a special set of parameters in the H\"older inequality. 
Other choices are possible and discussed in Remark \ref{rem:other-stab-measures}.}
This amplification factor is strongly connected to the stability of a CF and is minimal if the CF has nonnegative only cubature weights.

\subsection{Stability} 
\label{sub:stability}

Given are two functions $f,f^\varepsilon:\Omega \to \R$ with $|f(\boldsymbol{x}) - f^\varepsilon(\boldsymbol{x})| \leq \varepsilon$ for all $\boldsymbol{x} \in 
\Omega$. 
Then, we have 
\begin{equation}\label{eq:error-int}
  \left| I[f] - I[f^\varepsilon] \right|
    \leq I[1] \varepsilon.
\end{equation}
This means that the growth of errors in the input (data errors) are bounded by the factor 
$I[1]$.
For an $N$-point CF, on the other hand, we have \eqref{eq:stab-error}.
Here, the growth of data errors is bounded by the \emph{stability value} $\kappa(\mathbf{w})$, which is a usual stability measure for CFs. 
Note that 
\begin{equation}\label{eq:lower-bound-kappa}
	\kappa(\mathbf{w}) \geq \sum_{n=1}^N w_n = I[1] 
\end{equation} 
if the corresponding CF $C_N$ is exact for constants ($C_N[1] = I[1]$). 
Hence, to minimize $\kappa(\mathbf{w})$ and therefore the amplification of errors, a CF with nonnegative weights is desired. 
To such a CF we refer to as being \emph{nonnegative} (sometimes also \emph{perfectly stable} \cite{glaubitz2020shock,glaubitz2020stableQRs}). 
It should be stressed that the sharp lower bound \eqref{eq:lower-bound-kappa} is exclusive for nonnegative weight functions $\omega$ and CFs that are exact for constants. 
In fact, if $C_N[1] \neq I[1]$ it is possible that $\kappa(\mathbf{w}) < I[1]$. 
At the same time, for general (not necessarily nonnegative) weight functions, nonnegative weights might not ensure stability \cite{glaubitz2020stableQRs}.

\subsection{Exactness}
\label{sub:exactness}

Another important design criterion for CFs, which is strongly connected to their accuracy, is exactness.

\begin{definition}[The DoE]\label{def:DOE}
  A CF $C_N$ on $\Omega \subset \R^q$ is said to have \emph{(polynomial) DoE} $d$ if the 
\emph{exactness condition}
  \begin{equation}\label{eq:ex-cond}
    C_N[f] = I[f] \quad \forall f \in \mathbb{P}_d(\R^q)
  \end{equation}
  holds.\footnote{Many authors add that the CF should be inexact for a polynomial of degree $d+1$. This results in uniqueness for the DoE: It is the largest number $d \in \N_0$ such that \eqref{eq:ex-cond} holds. 
	Following Definition \ref{def:DOE}, on the other hand, a CF which has DoE $d$ also has DoE $\tilde{d}$ for $\tilde{d} \leq d$. 
	Yet, in the present discussion, this will not yield any problems and we therefore proceed to use the slightly simpler Definition \ref{def:DOE}.}
\end{definition}

Here, $\mathbb{P}_d(\R^q)$ denotes the vector space of all (algebraic) polynomials of  degree at most $d$.

\begin{remark}
	Note that an (algebraic) polynomial in $q$ variables ${\boldsymbol{x} = (x_1,\dots,x_q)}$ is a finite linear combination of monomials of the form ${\boldsymbol{x}^{\boldsymbol{\alpha}} = x_1^{\alpha_1} \dots x_q^{\alpha_q}}$ with the \emph{degree} (also known as the \emph{total degree}) defined as $|\boldsymbol{\alpha}| = \sum_{i=1}^q \alpha_i$.
	Then, the \emph{(total) degree of a polynomial} is the maximum of the degrees of its monomials. 
\end{remark}

\begin{remark} 
	While we only focus on the total degree in this work, other choices would be possible as well. 
	These include the \emph{absolute degree} ($|\boldsymbol{\alpha}|_{\infty} = \max_{i=1,\dots,q} \alpha_i$) and the \emph{Euclidean degree} ($|\boldsymbol{\alpha}|_{2}^2 = \sum_{i=1}^q \alpha_i^2$). 
	In fact, in some recent works \cite{trefethen2017cubature,trefethen2017multivariate,trefethen2021exactness}, it was pointed out that for angle-independent resolution in the hypercube it would be necessary to base CFs on the Euclidean degree (instead of the often used total degree). 
	Future work might provide a numerical comparison of different degrees in the context of the CFs discussed here. 
\end{remark}

Exactness ensured that polynomials up to a certain degree are treated exactly by the CF.
Let $\{ p_k \}_{k=1}^K$ be a basis of $\mathbb{P}_d(\R^q)$, where 
$K := \dim \mathbb{P}_d(\R^q) = \binom{d+q}{q}$.
Then, the exactness condition \eqref{eq:ex-cond} yields a linear system: 
\begin{equation}\label{eq:ex-cond-system}
  \underbrace{
  \begin{pmatrix}
    p_1(\mathbf{x}_1) & \dots & p_1(\mathbf{x}_N) \\ 
    \vdots & & \vdots \\ 
    p_K(\mathbf{x}_1) & \dots & p_K(\mathbf{x}_N)
  \end{pmatrix}}_{=: P}
  \underbrace{
  \begin{pmatrix} 
    w_1 \\ \vdots \\ w_N 
  \end{pmatrix}}_{=: \mathbf{w}} 
  = 
  \underbrace{
  \begin{pmatrix} 
    m_1 \\ \vdots \\ m_K 
  \end{pmatrix}}_{=: \mathbf{m}}
\end{equation}
Here, $m_k := I[p_k]$ denotes the \emph{$k$-th moment}. 
We can immediately note that a CF $C_N$ has DoE $d$ if and only if its weights 
solve \eqref{eq:ex-cond-system}. 
Let us assume $K < N$. 
Then, \eqref{eq:ex-cond-system} becomes underdetermined. 
These are well-known to either have no solution or infinitely 
many. 
The existence of infinitely many solutions is ensured if the set of data points is $\mathbb{P}_d(\R^q)$-unisolvent. 

\begin{definition}[Unisolvent Point Sets] 
  A set of points $X = \{\mathbf{x}_n\}_{n=1}^N \subset \R^q$ is called \emph{$\mathbb{P}_d(\R^q)$-unisolvent} if 
  \begin{equation}
    p(\mathbf{x}_n) = 0, \ n=1,\dots,N \implies p \equiv 0
  \end{equation}
  for all $p \in \mathbb{P}_d(\R^q)$. 
  That is, the only polynomial of degree $\leq d$ that interpolates zero data 
is the zero polynomial. 
\end{definition}

In this case, it is easy to note the following lemma. 

\begin{lemma}\label{lem:solution-space}
  Let $K = \dim \mathbb{P}_d(\R^q) < N$ and let $X = \{\mathbf{x}_n\}_{n=1}^N$ be 
$\mathbb{P}_d(\R^q)$-unisolvent. 
  Then, the SLE \eqref{eq:ex-cond-system} is underdetermined and induces an $(N-K)$-dimensional 
affine linear subspace of solutions 
  \begin{equation}\label{eq:sol-space}
    W := \left\{ \mathbf{w} \in \R^N \mid P\mathbf{w}=\mathbf{m} \right\}.
  \end{equation}
\end{lemma}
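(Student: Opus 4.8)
The plan is to verify the two assertions of the lemma separately: first that the system \eqref{eq:ex-cond-system} is underdetermined, and then that its solution set is a nonempty affine subspace of dimension $N-K$. Both reduce to understanding the rank of the matrix $P$, and the key observation is that $\mathbb{P}_d(\R^q)$-unisolvence of $X$ is exactly the statement that $P$ has full row rank $K$.

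First I would make the rank claim precise. The matrix $P \in \R^{K \times N}$ has rows indexed by basis functions $p_k$ and columns by data points $\mathbf{x}_n$. A vector $\boldsymbol{\lambda} = (\lambda_1,\dots,\lambda_K)^T$ lies in the left null space of $P$ (i.e.\ $P^T \boldsymbol{\lambda} = \mathbf{0}$) precisely when $\sum_{k=1}^K \lambda_k p_k(\mathbf{x}_n) = 0$ for every $n=1,\dots,N$. Setting $p := \sum_{k=1}^K \lambda_k p_k \in \mathbb{P}_d(\R^q)$, this says $p(\mathbf{x}_n)=0$ for all $n$, so by unisolvence $p \equiv 0$, and since $\{p_k\}_{k=1}^K$ is a basis we get $\boldsymbol{\lambda} = \mathbf{0}$. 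Hence the rows of $P$ are linearly independent, i.e.\ $\operatorname{rank} P = K$. Since $K < N$ by hypothesis, $P$ has strictly more columns than rows, so \eqref{eq:ex-cond-system} is an underdetermined SLE.

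Next I would invoke the standard linear-algebra facts about such systems. Because $\operatorname{rank} P = K$ equals the number of rows, the column space of $P$ is all of $\R^K$, so the system $P\mathbf{w} = \mathbf{m}$ is consistent for \emph{every} right-hand side $\mathbf{m}$, in particular for the moment vector; thus $W$ is nonempty. If $\mathbf{w}_0 \in W$ is any particular solution, then $W = \mathbf{w}_0 + \ker P$, which is an affine linear subspace. Finally, by the rank--nullity theorem, $\dim \ker P = N - \operatorname{rank} P = N - K$, which gives the claimed dimension.

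I do not anticipate a genuine obstacle here: the lemma is essentially a dictionary entry translating the geometric hypothesis (unisolvence) into the algebraic statement (full row rank) and then reading off the consequences. The only point requiring a modicum of care is the direction of the rank argument --- one must note that unisolvence controls the \emph{left} null space of $P$ (equivalently the linear independence of its rows), not its columns, and that the hypothesis $K<N$ is what makes the system underdetermined rather than merely square or overdetermined. Everything else is the rank--nullity theorem applied to $P$.
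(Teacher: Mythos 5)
Your proof is correct and follows essentially the same route as the paper: translate unisolvence into full rank of $P$, obtain a particular solution, write $W$ as that solution plus $\ker P$, and apply rank--nullity. You are in fact slightly more careful than the paper's own proof, which asserts that unisolvence makes the ``$K$ columns'' of $P$ linearly independent where it means the $K$ rows (equivalently, the columns of $P^T$); your left-null-space argument states this correctly.
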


\begin{proof}
  Note that $W$ can be rewritten as 
  \begin{equation}
    W = \mathbf{w}_s + W_0, \quad 
    W_0 := \left\{ \mathbf{w}_0 \in \R^N \mid P\mathbf{w}_0 = 0 \right\}.
  \end{equation}
  Here, $\mathbf{w}_s$ is a specific solution of $P\mathbf{w}=\mathbf{m}$ and $W_0$ is the linear solution space of 
the homogeneous problem. 
	$X$ being $\mathbb{P}_d(\R^q)$-unisolvent results in the $K$ rows of $P$ being linearly independent. 
	Thus, $P$ has full rank and $\mathbf{w}_s$ is ensured to exist. 
 	Finally, the rank-nullity theorem \cite[Theorem 2.8]{roman2005advanced} yields $\dim W_0 = N-K$ and therefore the assertion.
\end{proof}

\section{Proposed Methods} 
\label{sec:methods}

Let $N$ data pairs $(\mathbf{x}_n,f_n^\varepsilon)_{n=1}^N \subset \Omega \times \R$ be given. We now aim to construct nonnegative CFs with a high DoE. 
This is done by first and foremost ensuring that all weights are nonnegative and maximizing the DoE only afterward. 
This basic idea could be summarized as ``\emph{stability before exactness}''. 
In what follows, we present two methods to realize this strategy. 
This results in---to the best of the author's knowledge---novel stable high-order CFs for scattered data points. 
Both procedures start with the following two steps: 

\begin{enumerate}[label={(S\arabic*)}]
  \item\label{item:1}
  Determine the largest $d \in \N$ such that $X = \{\mathbf{x}_n\}_{n=1}^N$ is $\mathbb{P}_d(\R^q)$-unisolvent. 
  
  \item\label{item:2} 
  Formulate the linear system \eqref{eq:ex-cond-system} for this $d$. 

\end{enumerate}

Recall that \eqref{eq:ex-cond-system} becomes underdetermined for $K = \dim \mathbb{P}_d(\R^q) < N$. 
In this case, Lemma \ref{lem:solution-space} ensures that \eqref{eq:ex-cond-system} induces an $(N-K)$-dimensional affine linear subspace of solutions $W$. 
Every element $\mathbf{w} \in W$ results in a CF with DoE $d$. 
The two methods below, aim to determine a vector of weights $\mathbf{w} \in W$ that also yields favorable stability properties. 
That is, $\kappa(\mathbf{w})$ should be as small as possible. 
Note that $\kappa(\mathbf{w}) \geq I[1]$ with equality if and only if all weights are nonnegative.

\subsection{$\ell^1$ Cubature Formulas} 
\label{sub:l1-CFs}

Following the main goal---to ensure stability---it seems natural to determine an element $\mathbf{w}^* \in W$ such that 
\begin{equation} 
	\kappa(\mathbf{w}^*) \leq \kappa(\mathbf{w}) \quad \forall \mathbf{w} \in W.
\end{equation}
Since $\kappa(\mathbf{w}) = \| \mathbf{w} \|_1$, the resulting optimization problem corresponds to $\ell^1$-minimization. 
Thus, the element $\mathbf{w}^*$ is called an \emph{$\ell^1$-solution} from $W \subset \R^N$, which  is denoted as 
\begin{equation}\label{eq:l1-sol}
  \mathbf{w}^{\ell^1} = \argmin_{\mathbf{w} \in W} \ \norm{ \mathbf{w} }_1.
\end{equation} 
Constraint optimization problems of this type are known as basis pursuit problems \cite{chen2001atomic,boyd2004convex}. 
They play a central role in modern statistical signal processing, particularly the theory of compressed sensing; see \cite{candes2006stable,candes2006robust,donoho2006compressed,bruckstein2009sparse,foucart2017mathematical} and references therein.\footnote{It is also worth noting that basis pursuit is closely connected with linear programming \cite{bloomfield1983least,dantzig1998linear,dantzig2006linear,gill1991numerical,vanderbei2020linear}.} 
Once the cubature weights $\mathbf{w}^{\ell^1}$ have been computed, we check whether or not the resulting CF is nonnegative.
That is, if $\mathbf{w}^{\ell^1} \geq 0$ holds. 
If this is the case, the desired integral $I[f]$ is approximated by the following \emph{$\ell^1$-CF}:
\begin{equation}\label{eq:l1-CF}
	C^{\ell^1}_N[f^\varepsilon] := \sum_{n=1}^N w_n^{\ell^1} f^{\varepsilon}_n
\end{equation} 
By construction, this CF is perfectly stable, while having DoE $d$.
On the other hand, if the resulting CF is \emph{not} nonnegative, one decreases the DoE by one ($d \mapsto d-1$) and returns to \ref{item:2}.
The whole procedure is summarized in Algorithm \ref{algo:l1-CF}.

\begin{algorithm}
\caption{Construction of $\ell^1$-CFs}
\label{algo:l1-CF}
\begin{algorithmic}[1]
    \State{Determine the maximal $d \in \N$ such that $X$ is $\mathbb{P}_d(\R^q)$-unisolvent} 
    \State{Formulate the linear system $P \mathbf{w} = \mathbf{m}$ for DoE $d$}
    \State{Compute an $\ell^1$-solution $\mathbf{w}^{*} = \argmin_{\mathbf{w} \in W} \ \norm{ \mathbf{w} }_1$}
    \While{$\mathbf{w}^{\ell^1} \not\geq 0$} 
      \State{Reduce the DoE: $d = d-1$} 
      \State{Formulate linear system $P \mathbf{w} = \mathbf{m}$ for the decreased DoE $d$} 
      \State{Compute an $\ell^1$-solution $\mathbf{w}^{\ell^1} = \argmin_{\mathbf{w} \in W} \ \norm{ \mathbf{w} }_1$}
    \EndWhile 
    \State{Approximate $I[f]$ by $C^{\ell^1}_N[f^\varepsilon]$ as in \eqref{eq:l1-CF}} 
\end{algorithmic}
\end{algorithm} 

It should be pointed out that Algorithm \ref{algo:l1-CF} is only intended to provide a simple explanation of how the $\ell^1$-CFs are constructed. 
A computationally more efficient reformulation of the above construction procedure is described in Algorithm \ref{algo:CF-efficient}. 

\begin{remark}[Computation and Uniqueness of the $\ell^1$-Solution]
	Note that $\norm{\cdot}_1$ is a convex, but not strictly convex, norm. 
	Hence, in general, the $\ell^1$-solution will not be unique. 
	It is also well-known, however, that in most cases this is not a problem and the $\ell^1$-solution in fact is unique. 
	In many cases, the $\ell^1$-solution furthermore has the property of being a sparse solution; see  \cite{donoho2003optimally,donoho2006most,tsaig2006breakdown} (also see \cite{donoho2006most2}). 
	In recent years, this motivated many researchers to use the $\ell^1$-norm as a surrogate for the $\ell^0$-"norm" (number of nonzero entries) 
	\cite{candes2006stable,candes2006robust,donoho2006compressed,glaubitz2019high}.\footnote{Of course, the $\ell^0$-"norm" is not really a norm---it is not absolutely homogenous---and the problem of computing $\ell^0$-solutions is NP-hard.} 
\end{remark}

\subsection{Least Squares Cubature Formulas} 
\label{sub:LS-CFs}

Another option is to minimize a weighted $\ell^2$-norm instead of the $\ell^1$-norm. 
This approach is motivated by the wish to have---at least formally---an explicit representation for the cubature weights.  
The resulting vector of weights is referred to as the \emph{(weighted) LS solution}:
\begin{equation}\label{eq:LS-sol} 
	\mathbf{w}^{\mathrm{LS}} = \argmin_{\mathbf{w} \in W} \ \norm{ R^{-1/2} \mathbf{w} }_{2}
\end{equation}
Here, the weight matrix $R^{-1/2}$ is given by 
\begin{equation} 
	R^{-1/2} = \diag{ \frac{1}{\sqrt{r_1}}, \dots, \frac{1}{\sqrt{r_N}} }, \quad 
	r_n = \frac{\omega(\mathbf{x}_n)|\Omega|}{N} > 0.
\end{equation}
Thereby, $|\Omega|$ denotes the volume of $\Omega \subset \R^q$. 
This choice ensures that the cubature weights are nonnegative if $N$ is sufficiently larger than $K$ and $d$, respectively. 
A theoretical result concerning this is derived in \S \ref{sec:theoretical}; in particular, see Theorem \ref{thm:main}. 
Furthermore, the numerical tests performed in \S \ref{sec:tests} indicate the ratio $N \sim K^s$ with $s \approx 2$ to be sufficient. 
This is in accordance with similar findings from other works \cite{wilson1970necessary,huybrechs2009stable,glaubitz2020stable,glaubitz2020stableQRs,migliorati2018stable}. 

Regarding stability, the LS solution is expected to be inferior to the $\ell^1$ solution. 
In fact, we have $\kappa(\mathbf{w}^{\ell^1}) \leq \kappa(\mathbf{w}^{\mathrm{LS}})$. 
That said, the LS solution can be computed more efficiently than an $\ell^1$ solution. 
Moreover, $\mathbf{w}^{\mathrm{LS}}$ is unique and has an explicit representation (\cite{cline1976l_2}):
\begin{equation}\label{eq:LS-sol2}
  \mathbf{w}^{\mathrm{LS}} = R P^T (P R P^T)^{-1} \mathbf{m}  
\end{equation} 
Note that $R P^T (P R P^T)^{-1}$ is the Moore--Penrose pseudoinverse of $R^{-1/2}P$; see \cite{ben2003generalized}.
By utilizing a beautiful connection to DOPs, this formula can be considerably simplified; see \S \ref{sub:char}. 
Formula \eqref{eq:LS-sol2} has theoretical advantages and will be used to prove nonnegativity of the weights. 
In our implementation, however, the LS solution $\mathbf{w}^{\mathrm{LS}}$ is computed stably and more efficiently using the Matlab function \emph{lsqminnorm}. 
This function uses a pivoted QR decomposition of $A = P R^{1/2}$; see \cite{trefethen1997numerical,golub2012matrix,horn2012matrix,strang2019linear}.

Once $\mathbf{w}^{\mathrm{LS}}$ has been computed, the procedure is the same as for the $\ell^1$-CFs: 
The DoE $d$ is decreased until the LS solution \eqref{eq:LS-sol} yields in a nonnegative CF. 
The resulting CF is referred to as the \emph{LS-CF} and denoted by 
\begin{equation}\label{eq:LS-CF}
	C^{\mathrm{LS}}_N[f^\varepsilon] := \sum_{n=1}^N w_n^{\mathrm{LS}} f^{\varepsilon}_n.
\end{equation} 
The whole procedure is summarized in Algorithm \ref{algo:LS-CF}.
Algorithm \ref{algo:LS-CF} is only intended to provide a simple explanation of how the LS-CFs are constructed, however. 
Again, a computationally more efficient reformulation is provided in Algorithm \ref{algo:CF-efficient}.

\begin{algorithm}
\caption{Construction of LS-CFs}
\label{algo:LS-CF}
\begin{algorithmic}[1]
    \State{Determine the greatest $d \in \N$ such that $X$ is $\mathbb{P}_d(\R^q)$-unisolvent} 
    \State{Formulate the linear system $P \mathbf{w} = \mathbf{m}$ for DoE $d$}
    \State{Compute the LS solution $\mathbf{w}^{\mathrm{LS}} = \argmin_{\mathbf{w} \in W} \ \norm{ R^{-1/2} \mathbf{w} }_2$}
    \While{$\mathbf{w}^{\mathrm{LS}} \not\geq 0$} 
      \State{Reduce the DoE: $d = d-1$} 
      \State{Formulate the linear system $P \mathbf{w} = \mathbf{m}$ for the decreased DoE $d$} 
      \State{Compute the LS solution $\mathbf{w}^{\mathrm{LS}} = \argmin_{\mathbf{w} \in W} \ \norm{ R^{-1/2} \mathbf{w} }_2$}
    \EndWhile 
    \State{Approximate $I[f]$ by $C^{\mathrm{LS}}_N[f^\varepsilon]$ as in \eqref{eq:LS-CF}} 
\end{algorithmic}
\end{algorithm}

\begin{remark} 
	Nonnegativity for the $\ell^1$ and LS weights holds at latest for $d=0$. 
	In this case, there exists an $\ell^1$ solution with a single nonzero weight $w_k^{\ell^1} = I[1]$ and $w_n^{\ell^1} = 0$ for $n \neq k$. 
	The LS weights for $d=0$, on the other hand, are uniquely given by 
	${w_n^{\mathrm{LS}} = \omega(\mathbf{x}_n) ( \sum_{m=1}^N \omega(\mathbf{x}_m))^{-1} I[1]}$. 
\end{remark}
\section{Theoretical Results} 
\label{sec:theoretical}

At least formally, the LS weights \eqref{eq:LS-sol} are explicitly given by \eqref{eq:LS-sol2}. 
It is not recommended to actually solve \eqref{eq:LS-sol2}, however, since the normal matrix $P R P^T$ is known to often be ill-conditioned. 
Yet, at least when we incorporate DOPs, \eqref{eq:LS-sol2} is convenient for theoretical investigations. 
In fact, it is shown in this section that \eqref{eq:LS-sol2} reduces to ${\mathbf{w}^{\text{LS}} = R P^T \mathbf{m}}$ if $P$ and $\mathbf{m}$ are formulated w.\,r.\,t.\ DOPs. 
Building upon this formula it is proved that arbitrarily high DoEs are possible for the nonnegative $\ell^1$- and LS-CFs.

\subsection{Main Result and Consequences}

The (theoretical) main result of this work is the following theorem. 
It states that for any fixed DoE $d$ the corresponding LS weights are all nonnegative if a sufficiently large number of $\mathbb{P}_d(\R^q)$-unisolvent data points is used. 

\begin{theorem}\label{thm:main}
  Let $N_0 \in \N_0$ such that $X_{N_0} = \{ \mathbf{x}_n \}_{n=1}^{N_0} \subset \Omega$ is $\mathbb{P}_d(\R^q)$-unisolvent and 
$\omega(\mathbf{x}_n) > 0$ for ${n=1,\dots,N_0}$. 
  Moreover, for $N>N_0$, let $X_N = X_{N_0} \cup \{\mathbf{x}_n\}_{n=N_0 +1}^N$ and $r_n = \omega(\mathbf{x}_n) |\Omega|/N$. 
  Assume that 
  \begin{equation}\label{eq:cond4}
    \lim_{N \to \infty} \frac{|\Omega|}{N} \sum_{n=1}^N u(\mathbf{x}_n) v(\mathbf{x}_n) \omega(\mathbf{x}_n) 
      = \int_\Omega u(\boldsymbol{x}) v(\boldsymbol{x}) \omega(\boldsymbol{x}) \intd \boldsymbol{x} 
      \quad \forall u,v \in \mathbb{P}_d(\R^q).
  \end{equation}
  Then, there exists an $N_1 \geq N_0$ such that for all $N \geq N_1$ the cubature weights \eqref{eq:LS-sol} of the LS-CF with DoE $d$ are all nonnegative.
\end{theorem}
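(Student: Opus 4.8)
The plan is to reduce the nonnegativity of all LS weights to the statement that a single polynomial of degree $d$ converges, uniformly on $\Omega$, to the constant $1$. The starting observation is that the weighted LS solution \eqref{eq:LS-sol} depends only on the affine space $W$ of Lemma~\ref{lem:solution-space} and on the weight matrix $R$, not on the basis $\{p_k\}_{k=1}^K$ chosen in \eqref{eq:ex-cond-system}. I would therefore take $\{p_k\}_{k=1}^K = \{\pi_k\}_{k=1}^K$ to be an \emph{orthonormal basis of discrete orthogonal polynomials} for the discrete inner product $\langle u,v\rangle_{\mathbf{r}} := \sum_{n=1}^N r_n u(\mathbf{x}_n) v(\mathbf{x}_n)$. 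This is legitimate for every $N \geq N_0$: the point set $X_N \supseteq X_{N_0}$ is $\mathbb{P}_d(\R^q)$-unisolvent (unisolvence is inherited by supersets), and $r_n > 0$, so $\langle\cdot,\cdot\rangle_{\mathbf{r}}$ is a genuine inner product on $\mathbb{P}_d(\R^q)$. With this choice $P R P^T$ is the Gram matrix of the $\pi_k$, namely the identity, so \eqref{eq:LS-sol2} collapses to $\mathbf{w}^{\mathrm{LS}} = R P^T \mathbf{m}$, i.e.\ $w_n^{\mathrm{LS}} = r_n \Phi_N(\mathbf{x}_n)$ with $\Phi_N := \sum_{k=1}^K \pi_k\, I[\pi_k] \in \mathbb{P}_d(\R^q)$. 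Since $r_n > 0$, it suffices to prove $\Phi_N(\mathbf{x}) > 0$ for all $\mathbf{x}\in\Omega$ once $N$ is large.

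Next I would characterize $\Phi_N$ intrinsically and identify its limit. Expanding $\langle\Phi_N,\pi_j\rangle_{\mathbf{r}} = \sum_k I[\pi_k]\langle\pi_k,\pi_j\rangle_{\mathbf{r}} = I[\pi_j]$ shows that $\Phi_N$ is the unique $p\in\mathbb{P}_d(\R^q)$ with $\langle p,q\rangle_{\mathbf{r}} = I[q]$ for all $q\in\mathbb{P}_d(\R^q)$ — the Riesz representative of the functional $q\mapsto I[q]$ on $\mathbb{P}_d(\R^q)$ for $\langle\cdot,\cdot\rangle_{\mathbf{r}}$. Its continuous analogue is the unique $p\in\mathbb{P}_d(\R^q)$ with $\int_\Omega p\,q\,\omega\intd\boldsymbol{x} = I[q]$ for all such $q$; but $I[q] = \int_\Omega 1\cdot q\,\omega\intd\boldsymbol{x}$, so this representative is the constant polynomial $1$. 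Here one uses that $\langle u,v\rangle_\omega := \int_\Omega uv\,\omega\intd\boldsymbol{x}$ is an inner product on $\mathbb{P}_d(\R^q)$, which holds because $\omega\geq0$ is integrable with $\int_\Omega\omega = I[1] > 0$, so no nonzero polynomial of degree $\leq d$ vanishes $\omega$-almost everywhere. Thus the target is $\Phi_N \to 1$.

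To make this rigorous I would pass to coordinates. Fix the monomial basis $\{\boldsymbol{x}^{\boldsymbol{\alpha}}\}_{|\boldsymbol{\alpha}|\leq d}$ of $\mathbb{P}_d(\R^q)$, let $G_N$ and $G_\infty$ be its Gram matrices with respect to $\langle\cdot,\cdot\rangle_{\mathbf{r}}$ and $\langle\cdot,\cdot\rangle_\omega$, and let $\mathbf{b}$ collect the moments $I[\boldsymbol{x}^{\boldsymbol{\alpha}}]$. Then the coefficient vector of $\Phi_N$ is $G_N^{-1}\mathbf{b}$ and that of the constant $1$ is $G_\infty^{-1}\mathbf{b}$. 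Assumption \eqref{eq:cond4}, applied to all pairs of monomials of degree $\leq d$, is exactly the statement $G_N \to G_\infty$ entrywise. Since $G_\infty$ is symmetric positive definite, hence invertible, inversion is continuous at $G_\infty$, so $G_N^{-1}\mathbf{b} \to G_\infty^{-1}\mathbf{b}$; as $\Omega$ is bounded, the monomials are uniformly bounded on $\Omega$, so convergence of the coefficient vectors upgrades to $\sup_{\mathbf{x}\in\Omega}|\Phi_N(\mathbf{x}) - 1| \to 0$.

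Finally, uniform convergence produces an $N_1 \geq N_0$ with $\Phi_N(\mathbf{x}) \geq \tfrac12$ for all $\mathbf{x}\in\Omega$ and all $N\geq N_1$, whence $w_n^{\mathrm{LS}} = r_n\Phi_N(\mathbf{x}_n) \geq r_n/2 > 0$, which is even stronger than the asserted nonnegativity. The main obstacle I anticipate is not any single deduction but the bookkeeping that keeps the limiting argument clean: justifying the basis independence and the reduction of \eqref{eq:LS-sol2} that yields $w_n^{\mathrm{LS}} = r_n\Phi_N(\mathbf{x}_n)$, confirming that both inner products are genuinely nondegenerate on $\mathbb{P}_d(\R^q)$ so that $G_N$ and $G_\infty$ are invertible, and verifying that \eqref{eq:cond4} delivers precisely the entrywise convergence $G_N\to G_\infty$ needed. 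Everything past that point is the standard continuity of $A\mapsto A^{-1}$ together with the identification of the limiting Riesz representative as the constant~$1$.
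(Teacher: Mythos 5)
Your proof is correct, and after the shared first step---formulating the exactness system in a basis of discrete orthogonal polynomials so that $PRP^T=I$ and $w_n^{\mathrm{LS}}=r_n\sum_{k=1}^K\pi_k(\mathbf{x}_n;\mathbf{r})\,I[\pi_k(\cdot;\mathbf{r})]$, exactly as in \eqref{eq:LS-sol-explicit}---it takes a genuinely different route through the analytic core. The paper stays with the DOPs themselves: it proves by induction along the Gram--Schmidt recursion (Lemmas \ref{lem:lem1} and \ref{lem:lem2}) that each $\pi_k(\cdot;\mathbf{r})$ converges uniformly to the continuous orthogonal polynomial $\pi_k(\cdot;\omega)$, writes $I[\pi_k(\cdot;\mathbf{r})]=[\pi_k(\cdot;\mathbf{r}),1]_N-\varepsilon_k$ so that $w_n^{\mathrm{LS}}=r_n\bigl(1-\sum_k\varepsilon_k\pi_k(\mathbf{x}_n;\mathbf{r})\bigr)$, and then bounds the error term using the uniform boundedness of the DOPs together with $\varepsilon_k\to0$. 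You instead observe that $\Phi_N=\sum_k\pi_k\,I[\pi_k]$ is a basis-independent object---the Riesz representative of $I$ on $\mathbb{P}_d(\R^q)$ for the discrete inner product---identify its continuous counterpart as the constant $1$, and prove $\Phi_N\to1$ uniformly by a single linear-algebra step: \eqref{eq:cond4} is entrywise convergence of the monomial Gram matrices $G_N\to G_\infty$, and continuity of inversion at the positive definite limit does the rest. This bypasses the two convergence lemmas entirely and is arguably the more economical argument; the paper's route has the side benefit of establishing uniform convergence of the DOPs to the COPs, which it reuses elsewhere (e.g., in the comparison with Monte Carlo weights in \S\ref{sub:connection}). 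Two small imprecisions, neither fatal: the theorem only guarantees $\omega(\mathbf{x}_n)>0$ for $n\le N_0$, so positive definiteness of the discrete inner product comes from $X^+\supseteq X_{N_0}$ being unisolvent rather than from all $r_n$ being positive; and for the same reason your final bound should read $w_n^{\mathrm{LS}}=r_n\Phi_N(\mathbf{x}_n)\ge0$, with equality possible where $\omega(\mathbf{x}_n)=0$, which is all the theorem claims (the paper's proof elides the same point).
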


The proof of the above theorem is provided in \S \ref{sub:proof}.
The following corollary is a direct consequence of Theorem \ref{thm:main}. 

\begin{corollary}\label{cor:cor_main}
  Let $d \in \N_0$. 
  Under the same assumptions as in Theorem \ref{thm:main}, there exists an $N_1 \in \N_0$ such that for all $N \geq 
N_1$ the following statements hold: 
  \begin{enumerate} 
    \item[(a)] The LS-CF with cubature weights $\mathbf{w}^{\mathrm{LS}} \in \R^N$ given by \eqref{eq:LS-sol} is nonnegative.
    \item[(b)] The $\ell^1$-CF with cubature weights $\mathbf{w}^{\ell^1} \in \R^N$ given by \eqref{eq:l1-sol} is nonnegative. 
  \end{enumerate}
\end{corollary}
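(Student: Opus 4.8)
The plan is to derive both parts of the corollary directly from Theorem~\ref{thm:main} and Lemma~\ref{lem:connection}, together with the minimality that defines the $\ell^1$-solution. The entire nontrivial content is already contained in Theorem~\ref{thm:main}; what remains is to repackage that nonnegativity statement in the language of perfect stability and then to transfer it from the LS weights to the $\ell^1$ weights via a sandwich argument. I would fix $d \in \N_0$ once and for all and use the same threshold $N_1$ for both parts.

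For part (a), I would first invoke Theorem~\ref{thm:main} to obtain an $N_1 \geq N_0$ such that for every $N \geq N_1$ the LS weights $\mathbf{w}^{\mathrm{LS}}$ of \eqref{eq:LS-sol} are all nonnegative; that is, the LS-CF is nonnegative in the sense of the corresponding definition. Since $d \geq 0$, the constant polynomial lies in $\mathbb{P}_d(\R^q)$, so any CF with DoE $d$---in particular the LS-CF---satisfies the exactness condition for constants, $C_N^{\mathrm{LS}}[1] = I[1]$. Lemma~\ref{lem:connection} then applies verbatim, since a CF satisfying $C_N[1] = I[1]$ is perfectly stable if and only if it is nonnegative. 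Combining the two facts yields perfect stability of the LS-CF for all $N \geq N_1$, which is precisely part (a).

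For part (b), I would keep the same $N_1$ and exploit that both $\mathbf{w}^{\ell^1}$ and $\mathbf{w}^{\mathrm{LS}}$ are elements of the solution space $W$ from Lemma~\ref{lem:solution-space}. On the one hand, because $\mathbf{w}^{\ell^1}$ minimizes $\norm{\cdot}_1 = \kappa(\cdot)$ over $W$ by \eqref{eq:l1-sol}, and $\mathbf{w}^{\mathrm{LS}} \in W$, I obtain the upper bound $\kappa(\mathbf{w}^{\ell^1}) \leq \kappa(\mathbf{w}^{\mathrm{LS}})$, while part (a) supplies $\kappa(\mathbf{w}^{\mathrm{LS}}) = I[1]$. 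On the other hand, every $\mathbf{w} \in W$ produces a CF with DoE $d \geq 0$, so $\sum_{n=1}^N w_n = C_N[1] = I[1]$, which furnishes the universal lower bound $\kappa(\mathbf{w}^{\ell^1}) = \norm{\mathbf{w}^{\ell^1}}_1 \geq \sum_{n=1}^N w_n^{\ell^1} = I[1]$. These two inequalities sandwich $\kappa(\mathbf{w}^{\ell^1})$ to the single value $I[1]$, establishing perfect stability of the $\ell^1$-CF and hence part (b).

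The corollary presents essentially no obstacle of its own; the genuine difficulty sits entirely in Theorem~\ref{thm:main}, which I am permitted to assume. The one point within the present argument that warrants care is the sandwich step for part (b): I must confirm that $\mathbf{w}^{\mathrm{LS}}$ is an admissible competitor in the $\ell^1$-minimization, i.e.\ that it genuinely lies in $W$, so that the minimality inequality $\kappa(\mathbf{w}^{\ell^1}) \leq \kappa(\mathbf{w}^{\mathrm{LS}})$ is valid. This holds because the LS solution \eqref{eq:LS-sol} is by construction selected from $W$. Once this is secured, perfect stability of the $\ell^1$-CF follows for the \emph{same} threshold $N_1$ as for the LS-CF, with no additional largeness condition on $N$ required.
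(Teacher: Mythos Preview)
Your proof is correct and follows essentially the same approach as the paper: part (a) combines Theorem~\ref{thm:main} with Lemma~\ref{lem:connection}, and part (b) uses the inequality $\kappa(\mathbf{w}^{\ell^1}) \leq \kappa(\mathbf{w}^{\mathrm{LS}})$. Your sandwich argument for part (b) makes explicit the lower bound $\kappa(\mathbf{w}^{\ell^1}) \geq I[1]$ that the paper leaves implicit, which is a welcome clarification.
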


The first statement just summarizes Theorem \ref{thm:main}.  
The second statement follows from the observation that ${\kappa(\mathbf{w}^{\ell^1}) \leq \kappa(\mathbf{w}^\mathrm{LS})}$.

\begin{remark}
  	It can be argued that \eqref{eq:cond4} is a reasonable assumption on the (sequence) of data points. 
  	E.\,g., when the data points are obtained by random samples, 
  $\frac{|\Omega|}{N} \sum_{n=1}^N u(\mathbf{x}_n) v(\mathbf{x}_n) \omega(\mathbf{x}_n)$ corresponds to MC integration. 
  	In this is \eqref{eq:cond4} is ensured in a probabilistic sense by the law of large numbers; see \cite[Ch. 5.9]{davis2007methods}. 
	In fact, this special case is in accordance with the results obtained in \cite{migliorati2018stable}. 
  	Other classes of (sequences of) data points satisfying \eqref{eq:cond4} include low-discrepency  \cite{metropolis1949monte,caflisch1998monte,dick2013high} and \emph{equidistributed} (also called \emph{uniformly distributed}) \cite{weyl1916gleichverteilung,kuipers2012uniform} sequences of (partially or fully deterministic) data points. 
\end{remark}

\subsection{Continuous and Discrete Orthogonal Polynomials} 
\label{sub:DOPs}

Let us consider the following continuous inner product induced by the nonnegative weight function $\omega$: 
\begin{equation}\label{eq:cont-inner-prod}
  \scp{u}{v} = \int_{\Omega} u(\boldsymbol{x}) v(\boldsymbol{x}) \omega(\boldsymbol{x}) \intd \boldsymbol{x}
\end{equation}
The corresponding norm is $\norm{\cdot} = \sqrt{\scp{\cdot}{\cdot}}$. 
If the inner product \eqref{eq:cont-inner-prod} is positive definite on $\mathbb{P}_d(\R^q)$ it induces a basis of orthogonal (OG) polynomials $\{ \pi_k \}_{k=1}^K$, where ${K=\text{dim } \mathbb{P}_d(\R^q)}$. 
That is, the $\pi_k$ satisfy 
\begin{equation}
  \scp{\pi_k}{\pi_l} = \delta_{k,l},
\end{equation} 
and span the space $\mathbb{P}_d(\R^q)$. 
These polynomials are referred to as \emph{continuous orthogonal polynomials (COPs)} and denoted by $\pi_k(\cdot,\omega)$.
%
Analogously, a discrete inner product can be induced by a vector of nonnegative weights $\mathbf{r} = (r_1,\dots,r_N)$:
\begin{equation}\label{eq:disc-inner-prod}
  [u,v]_N = \sum_{n=1}^N r_n u(\mathbf{x}_n) v(\mathbf{x}_n)
\end{equation} 
The corresponding norm is $\norm{\cdot}_N = \sqrt{[\cdot,\cdot]_N}$.
Let us denote the set of all data points $\mathbf{x}_n$ for which the corresponding weights $r_n$ are positive by $X^+$. 
That is, $X^+ = \{ \, \mathbf{x}_n \in X \mid r_n > 0 \, \}$. 
If this set is $\mathbb{P}_d(\R^q)$-unisolvent, then \eqref{eq:disc-inner-prod} is positive definite on $\mathbb{P}_d(\R^q)$. 
Hence, \eqref{eq:disc-inner-prod} induces a basis $\{ \pi_k \}_{k=1}^K$ of so-called \emph{DOPs} then. 
These satisfy 
\begin{equation}
  [\pi_k,\pi_l]_N = \delta_{k,l},
\end{equation} 
while spanning $\mathbb{P}_d(\R^q)$. 
We denote them by $\pi_k(\cdot,\mathbf{r})$. 
%
Both OG bases can be constructed, for instance, by \emph{Gram--Schmidt (GS) orthogonalization} \cite{trefethen1997numerical}: 
Let $\{e_k\}_{k=1}^K$ be the set of monomials $e_k(\boldsymbol{x}) := \boldsymbol{x}^{\boldsymbol{\alpha}}$ with $|\boldsymbol{\alpha}| \leq d$. 
These are assumed to be ordered w.\,r.\,t.\ their degree. 
In particular, $e_1 \equiv 1$.
Then, the OG polynomials are respectively constructed as 
\begin{equation}\label{eq:Gram-Schmidt}
\begin{aligned}
  \tilde{\pi}_k(\boldsymbol{x};\omega) & = e_k(\boldsymbol{x}) - \sum_{l=1}^{k-1} \scp{e_k}{\pi_l(\cdot;\omega)} \pi_l(\boldsymbol{x};\omega), \quad 
  && \pi_k(\boldsymbol{x};\omega) = \frac{\tilde{\pi}_k(\boldsymbol{x};\omega)}{\norm{\tilde{\pi}_k(\cdot;\omega)}}, \\ 
  \tilde{\pi}_k(\boldsymbol{x};\mathbf{r}) & = e_k(\boldsymbol{x}) - \sum_{l=1}^{k-1} [e_k,\pi_l(\cdot;\mathbf{r})]_N \pi_l(\boldsymbol{x};\mathbf{r}), \quad 
  && \pi_k(\boldsymbol{x};\mathbf{r}) = \frac{\tilde{\pi}_k(\boldsymbol{x};\mathbf{r})}{\norm{\tilde{\pi}_k(\cdot;\mathbf{r})}_N}.
\end{aligned}
\end{equation} 
Note that we only utilize GS orthogonalization for theoretical purposes. 
In our implementation, the LS weights are computed based on a pivoted QR decomposition of $A = P R^{1/2}$. 

\subsection{Characterization of the Least Squares Solution} 
\label{sub:char}

At least formally, the LS solution $\mathbf{w}^{\text{LS}}$ is given by \eqref{eq:LS-sol2}.
The real beauty of the LS approach is revealed, however, once we incorporate the concept of DOPs. 
In fact, the matrix product $P R P^T$ in \eqref{eq:LS-sol2} can be identified as a Gram matrix w.\,r.\,t.\ the discrete inner product \eqref{eq:disc-inner-prod}:
\begin{equation}
  P R P^T = 
  \begin{pmatrix}
    [p_1,p_1]_N & \dots & 
    [p_1,p_K]_N \\ 
    \vdots & & \vdots \\ 
    [p_K,p_1]_N & \dots & 
    [p_K,p_K]_N \\ 
  \end{pmatrix}
\end{equation}
Let us formulate the Vandermonde matrix $P$ and the vector of moments $\mathbf{m}$ in  \eqref{eq:ex-cond-system} w.\,r.\,t.\ the basis of DOPs $\{\pi_k(\cdot,\mathbf{r})\}_{k=1}^K$.
Then, $P R P^T = I$ and therefore 
\begin{equation}\label{eq:LS-sol3}
  \mathbf{w}^{\mathrm{LS}} = R P^T \mathbf{m}.  
\end{equation}
Thus, the LS weights $\mathbf{w}^{\mathrm{LS}}$ are explicitly given by 
\begin{equation}\label{eq:LS-sol-explicit}
  w_n^{\mathrm{LS}} = r_n \sum_{k=1}^K \pi_k( \mathbf{x}_n ; \mathbf{r}) I[ \pi_k( \, \cdot \, ; \mathbf{r} ) ], \quad n=1,\dots,N. 
\end{equation}
In particular, this formula enables us to subsequently prove nonnegativity of the LS weights.

\subsection{Proof of the Main Results} 
\label{sub:proof}

Let us start with two preliminary results on the convergence of discrete inner products and the induced DOPs. 
Afterward, these will be used to prove our main result, Theorem \ref{thm:main}.

\begin{lemma}\label{lem:lem1}
  Assume that
  \begin{equation}\label{eq:cond1}
    \lim_{N \to \infty} [u,v]_N = \scp{u}{v} \quad \forall u,v \in \mathbb{P}_d(\R^q).
  \end{equation}
  Moreover, let $(u_N)_{N \in \N}$ and $(v_N)_{N \in \N}$ be two sequences in $\mathbb{P}_d(\R^q)$ with 
  \begin{equation}\label{eq:cond2}
    u_N \to u, \quad v_N \to v \text{ in } L^\infty(\Omega)
  \end{equation}
  for $N \to \infty$, where $u,v \in \mathbb{P}_d(\R^q)$ and $\Omega \subset \R^q$. 
  Then, 
  \begin{equation}\label{eq:assertion}
    \lim_{N \to \infty} [u_N,v_N]_N = \scp{u}{v}.
  \end{equation}
\end{lemma}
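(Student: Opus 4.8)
The plan is to split the error $[u_N,v_N]_N - \scp{u}{v}$ into a part that is controlled by the hypothesis \eqref{eq:cond1} and a part that is controlled by the uniform convergence \eqref{eq:cond2}, exploiting that the discrete bilinear forms $[\cdot,\cdot]_N$ are uniformly bounded on $\mathbb{P}_d(\R^q)$ because of \eqref{eq:cond1}. First I would write the telescoping decomposition
\begin{equation*}
  [u_N,v_N]_N - \scp{u}{v}
  = \bigl( [u_N,v_N]_N - [u,v]_N \bigr) + \bigl( [u,v]_N - \scp{u}{v} \bigr),
\end{equation*}
and further split the first bracket via bilinearity as $[u_N - u, v_N]_N + [u, v_N - v]_N$. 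The second bracket tends to $0$ directly by \eqref{eq:cond1}, so everything reduces to showing that $[u_N - u, v_N]_N \to 0$ and $[u, v_N - v]_N \to 0$.

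The key step is a uniform bound: there is a constant $C$, independent of $N$, such that $\bigl| [p,p']_N \bigr| \leq C \norm{p}_{L^\infty(\Omega)} \norm{p'}_{L^\infty(\Omega)}$ for all $p,p' \in \mathbb{P}_d(\R^q)$. To get this, note that by Cauchy--Schwarz for the discrete form, $\bigl|[p,p']_N\bigr| \leq \norm{p}_N \norm{p'}_N$, so it suffices to bound $\norm{p}_N^2 = [p,p]_N$. Fix a basis $\{e_k\}_{k=1}^K$ of $\mathbb{P}_d(\R^q)$ (say the monomials); writing $p = \sum_k c_k e_k$ gives $[p,p]_N = \sum_{k,l} c_k c_l [e_k,e_l]_N$, and by \eqref{eq:cond1} each $[e_k,e_l]_N$ converges, hence is bounded by some $M$ uniformly in $N$. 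Thus $[p,p]_N \leq M \sum_{k,l} |c_k||c_l| = M \norm{\mathbf{c}}_1^2$, and since all norms on the finite-dimensional space $\mathbb{P}_d(\R^q)$ are equivalent, $\norm{\mathbf{c}}_1 \leq C' \norm{p}_{L^\infty(\Omega)}$ (here one uses that $\Omega$ has positive volume, so $\norm{\cdot}_{L^\infty(\Omega)}$ is genuinely a norm on polynomials). This yields the claimed uniform bound with $C = M (C')^2$.

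With the uniform bound in hand, the conclusion is immediate: $\bigl|[u_N - u, v_N]_N\bigr| \leq C \norm{u_N - u}_{L^\infty(\Omega)} \norm{v_N}_{L^\infty(\Omega)}$, and since $\norm{v_N}_{L^\infty(\Omega)}$ is bounded (it converges to $\norm{v}_{L^\infty(\Omega)}$) while $\norm{u_N - u}_{L^\infty(\Omega)} \to 0$ by \eqref{eq:cond2}, this term vanishes; similarly $\bigl|[u, v_N - v]_N\bigr| \leq C \norm{u}_{L^\infty(\Omega)} \norm{v_N - v}_{L^\infty(\Omega)} \to 0$. Collecting the three pieces gives $[u_N,v_N]_N \to \scp{u}{v}$. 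The main obstacle is really just the uniform boundedness of $[\cdot,\cdot]_N$ on $\mathbb{P}_d(\R^q)$ — one must be careful that the bound is in terms of a fixed norm (the $L^\infty$ norm on $\Omega$, which appears in the hypothesis) and does not secretly depend on $N$; once the finite dimension of $\mathbb{P}_d(\R^q)$ and norm equivalence are invoked correctly, the rest is routine.
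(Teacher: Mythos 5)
Your proof is correct and follows essentially the same route as the paper: a telescoping decomposition whose cross terms are handled by Cauchy--Schwarz for the discrete form together with a uniform bound on the discrete norms. The only difference is cosmetic: where you obtain the uniform bound $\norm{p}_N \leq C\norm{p}_{L^\infty(\Omega)}$ via a basis expansion and norm equivalence on the finite-dimensional space $\mathbb{P}_d(\R^q)$, the paper gets it directly from $\norm{p}_N^2 \leq \norm{1}_N^2 \norm{p}_{L^\infty(\Omega)}^2$ (using $r_n \geq 0$, $\mathbf{x}_n \in \Omega$, and $\norm{1}_N^2 \to \norm{1}^2$), which is slightly shorter but equivalent in substance.
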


\begin{proof}
  Note that 
  \begin{equation} 
  \begin{aligned} 
    \left| \scp{u}{v} - [u_N,v_N]_N \right| 
      \leq & \left| \scp{u}{v} - [u,v]_N \right| + \left| [u,v]_N - [u_N,v]_N \right| \\ 
      & + \left| [u_N,v]_N - [u_N,v_N]_N 
\right|. 
  \end{aligned}
  \end{equation}
  The first term on the right-hand side converges to zero due to \eqref{eq:cond1}. 
  For the second term, the Cauchy--Schwarz inequality gives 
  \begin{equation}
    \left| [u,v]_N - [u_N,v]_N \right|^2  
      = \left| [u-u_N,v]_N \right|^2  
      \leq \norm{ u-u_N }_N^2 \norm{ v }_N^2.
  \end{equation} 
  Furthermore, \eqref{eq:cond1} implies $\norm{ v }_N^2 \to \norm{ v }^2$ for $N \to \infty$.
  Finally, the H\"older inequality and \eqref{eq:cond2} yield 
  \begin{equation}
    \norm{ u-u_N }_N^2 \leq \norm{ 1 }_N^2 \norm{ u-u_N }_{L^\infty(\Omega)}^2 \to 0, 
    \quad N \to \infty.
  \end{equation}
  Thus, the second term converges to zero as well. 
  A similar argument can be used to show that the third term converges to zero. 
\end{proof}

Next, we demonstrate that the DOPs 
$\pi_k(\cdot,\mathbf{r})$ converge uniformly to the COPs $\pi_k(\cdot,\omega)$ if the corresponding discrete inner product converges to the continuous one for all polynomials of degree at most $d$.

\begin{lemma}\label{lem:lem2}
  Assume that 
  \begin{equation}\label{eq:lem2-cond}
    \lim_{N \to \infty} [u,v]_N = \scp{u}{v} \quad \forall u,v \in \mathbb{P}_d(\R^q).
  \end{equation} 
  For $k=1,\dots,K$, let $\pi_k(\cdot;\mathbf{r})$ and $\pi_k(\cdot;\omega)$ respectively denote the $k$-th DOP and COP constructed by GS orthogonalization \eqref{eq:Gram-Schmidt}. 
  Then, we have 
  \begin{equation}
    \pi_k(\cdot;\mathbf{r}) \to \pi_k(\cdot;\omega) \text{ in } L^\infty(\Omega)
  \end{equation}
  for $N \to \infty$ and $k=1,\dots,K$.
\end{lemma}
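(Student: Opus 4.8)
The plan is to prove this by induction on $k$, mirroring the recursive structure of the Gram--Schmidt construction \eqref{eq:Gram-Schmidt}. The base case $k=1$ is immediate: $e_1 \equiv 1$, so $\tilde\pi_1(\cdot;\mathbf{r}) = \tilde\pi_1(\cdot;\omega) \equiv 1$, and $\pi_1(\cdot;\mathbf{r}) = 1/\norm{1}_N \to 1/\norm{1} = \pi_1(\cdot;\omega)$ by \eqref{eq:lem2-cond} applied to $u=v=1$ (which gives $\norm{1}_N^2 = [1,1]_N \to \scp{1}{1} = \norm{1}^2 > 0$). The convergence of a constant multiple of the fixed function $1$ in $L^\infty(\Omega)$ follows since $\Omega$ is bounded.

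\textbf{Inductive step.} Assume $\pi_l(\cdot;\mathbf{r}) \to \pi_l(\cdot;\omega)$ in $L^\infty(\Omega)$ for all $l \leq k-1$. I first handle the unnormalized polynomial $\tilde\pi_k$. In the formula $\tilde\pi_k(\cdot;\mathbf{r}) = e_k - \sum_{l=1}^{k-1} [e_k,\pi_l(\cdot;\mathbf{r})]_N \, \pi_l(\cdot;\mathbf{r})$, there are two sources of $N$-dependence: the coefficients $[e_k,\pi_l(\cdot;\mathbf{r})]_N$ and the functions $\pi_l(\cdot;\mathbf{r})$ themselves. For the coefficients, apply Lemma \ref{lem:lem1} with the \emph{constant} sequence $u_N = e_k$ and the sequence $v_N = \pi_l(\cdot;\mathbf{r})$, which converges to $\pi_l(\cdot;\omega)$ by the inductive hypothesis; this yields $[e_k,\pi_l(\cdot;\mathbf{r})]_N \to \scp{e_k}{\pi_l(\cdot;\omega)}$. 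Since each such coefficient converges to a finite limit and each $\pi_l(\cdot;\mathbf{r}) \to \pi_l(\cdot;\omega)$ uniformly, the finite linear combination converges: $\tilde\pi_k(\cdot;\mathbf{r}) \to e_k - \sum_{l=1}^{k-1} \scp{e_k}{\pi_l(\cdot;\omega)} \pi_l(\cdot;\omega) = \tilde\pi_k(\cdot;\omega)$ in $L^\infty(\Omega)$. Here I use that products and sums of $L^\infty(\Omega)$-convergent sequences of polynomials (with bounded coefficients) again converge in $L^\infty(\Omega)$, which holds because $\Omega$ is bounded so all monomials are uniformly bounded on $\Omega$.

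\textbf{Normalization.} It remains to pass from $\tilde\pi_k$ to $\pi_k = \tilde\pi_k / \norm{\tilde\pi_k}$. I need $\norm{\tilde\pi_k(\cdot;\mathbf{r})}_N \to \norm{\tilde\pi_k(\cdot;\omega)}$. This is exactly Lemma \ref{lem:lem1} applied with $u_N = v_N = \tilde\pi_k(\cdot;\mathbf{r})$, which converges to $\tilde\pi_k(\cdot;\omega)$ in $L^\infty(\Omega)$ by the previous step, giving $\norm{\tilde\pi_k(\cdot;\mathbf{r})}_N^2 = [\tilde\pi_k(\cdot;\mathbf{r}),\tilde\pi_k(\cdot;\mathbf{r})]_N \to \scp{\tilde\pi_k(\cdot;\omega)}{\tilde\pi_k(\cdot;\omega)} = \norm{\tilde\pi_k(\cdot;\omega)}^2$. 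Crucially, this limit is strictly positive: $\tilde\pi_k(\cdot;\omega)$ is the $k$-th Gram--Schmidt polynomial for the continuous inner product, which is nonzero because $\scp{\cdot}{\cdot}$ is positive definite on $\mathbb{P}_d(\R^q)$ (this is the standing assumption that makes the COPs well defined) and $e_k$ is not in the span of $e_1,\dots,e_{k-1}$. Hence for large $N$ the denominator $\norm{\tilde\pi_k(\cdot;\mathbf{r})}_N$ is bounded away from zero, and combining the convergence of numerator and denominator gives $\pi_k(\cdot;\mathbf{r}) \to \pi_k(\cdot;\omega)$ in $L^\infty(\Omega)$, completing the induction.

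\textbf{Main obstacle.} The one genuinely delicate point is ensuring the normalizing denominators stay bounded away from zero; without the positive-definiteness of the continuous inner product on $\mathbb{P}_d(\R^q)$ this would fail, and it is the reason the convergence can only be asserted for $N$ sufficiently large rather than for all $N$. The bookkeeping that products and linear combinations of uniformly convergent polynomial sequences remain uniformly convergent on the bounded domain $\Omega$ is routine, and the two nontrivial analytic facts — handling the simultaneous $N$-dependence of coefficients and basis functions, and converting $N$-norms to continuous norms — are both deferred cleanly to Lemma \ref{lem:lem1}.
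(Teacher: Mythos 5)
Your proof is correct and follows essentially the same route as the paper: induction on $k$ through the Gram--Schmidt recursion, with Lemma \ref{lem:lem1} handling both the convergence of the coefficients $[e_k,\pi_l(\cdot;\mathbf{r})]_N$ and of the normalizing factors $\norm{\tilde{\pi}_k(\cdot;\mathbf{r})}_N$. Your explicit remark that the limiting norm $\norm{\tilde{\pi}_k(\cdot;\omega)}$ is strictly positive (so the denominators are eventually bounded away from zero) is a detail the paper leaves implicit, but it does not change the argument.
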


\begin{proof}
  	The assertion is proven by induction. 
	For $k=1$ the assertion is trivial and essentially follows from ${\|1\|_N \to \|1\|}$ for ${N \to \infty}$. 
	In a second step, it is argued that if the assertion holds for the first $k-1$ OG polynomials, then it also holds for the $k$-th OG polynomial. 
  Thus, assume that 
  \begin{equation}
    \pi_l(\cdot;\mathbf{r}) \to \pi_l(\cdot;\omega) \text{ in } L^\infty(\Omega), \quad N \to \infty,
  \end{equation}
  holds for $l=1,\dots,k-1$.
  By the GS orthogonalization, the $k$-th OG polynomials are given by \eqref{eq:Gram-Schmidt}. 
  Lemma \ref{lem:lem1} implies   
  \begin{equation}
  	[e_k,\pi_l(\cdot;\mathbf{r})]_N \to \scp{e_k}{\pi_l(\cdot;\omega)}, \quad l=1,\dots,k-1,
  \end{equation}
  and therefore 
  \begin{equation}
    \tilde{\pi}_k(\cdot;\mathbf{r}) \to \tilde{\pi}_k(\cdot;\omega) \text{ in } L^\infty(\Omega), 
    \quad N \to \infty.
  \end{equation} 
  Furthermore, Lemma \ref{lem:lem1} yields 
  ${\|\tilde{\pi}_k(\cdot;\mathbf{r})\|_N \to \|\tilde{\pi}_k(\cdot;\omega)\|}$ 
  for $N \to \infty$. 
  This implies 
  \begin{equation}
    \pi_k(\cdot;\mathbf{r}) \to \pi_k(\cdot;\omega) \text{ in } L^\infty(\Omega), 
    \quad N \to \infty,
  \end{equation}
  which completes the proof. 
\end{proof}

The previous two lemmas can now be utilized to prove the main result. 
That is, Theorem \ref{thm:main}.

\begin{proof}[Proof of Theorem \ref{thm:main}]
  Recal that the LS weights $\mathbf{w}^{\mathrm{LS}}$ are explicitly given by \eqref{eq:LS-sol-explicit}. 
  Defining
  \begin{equation}
    \varepsilon_k := [\pi_k(\cdot;\mathbf{r}),1]_N - \scp{\pi_k(\cdot;\mathbf{r})}{1},
  \end{equation}
  the LS weights can be rewritten as 
  \begin{align}
    w_n^{\mathrm{LS}} 
      = r_n \left( \pi_1(\mathbf{x}_n;\mathbf{r}) [\pi_1(\cdot;\mathbf{r}),1]_N 
	- \sum_{k=1}^K \varepsilon_k \pi_k(\mathbf{x}_n;\mathbf{r}) \right).
  \end{align}
  Assuming the DOPs $\{ \pi_k(\cdot;\mathbf{r}) \}_{k=1}^K$ are ordered, one has $\pi_1(\cdot;\mathbf{r}) \equiv 1/\|1\|_N$. 
  This yields 
  \begin{equation}
    \pi_1(\mathbf{x}_n;\mathbf{r}) [\pi_1(\cdot;\mathbf{r}),1]_N 
      = [\pi_1(\cdot;\mathbf{r}),\pi_1(\cdot;\mathbf{r})]_N 
      = 1.
  \end{equation}
  The assertion $w_n^{\mathrm{LS}} \geq 0$ is therefore equivalent to 
  \begin{equation}
    \sum_{k=1}^K \varepsilon_k \pi_k(\mathbf{x}_n;\mathbf{r}) \leq 1.
  \end{equation}
  Next note that \eqref{eq:cond4} implies $[\cdot,\cdot]_N$ converging to 
  $\scp{\cdot}{\cdot}$ for all polynomials of degree at most $d$. 
  Hence, Lemma \ref{lem:lem2} provides us with 
  \begin{equation}\label{eq:proof1}
    \pi_k(\cdot;\mathbf{r}) \to \pi_k(\cdot;\omega) \text{ in } L^\infty(\Omega)
  \end{equation}
  for $N \to \infty$ and $k=1,\dots,K$. 
  In particular, the DOPs are uniformly bounded. 
  That is, there exists a constant $C>0$ such that 
  ${| \pi_k(\boldsymbol{x};\mathbf{r}) | \leq C}$ for all $\boldsymbol{x} \in \Omega$ and $k=1,\dots,K$. 
  Thus,  
  \begin{equation}
    \sum_{k=1}^K \varepsilon_k \pi_k(\mathbf{x}_n;\mathbf{r}) 
      \leq C \sum_{k=1}^K \left| \varepsilon_k \right|.
  \end{equation}
  Since uniform convergence \eqref{eq:proof1} holds, Lemma \ref{lem:lem1} yields $\varepsilon_k \to 0$ for $N \to \infty$ and $k=1,\dots,K$. 
  Hence, there exists an $N_1 \geq N_0$ such that 
  \begin{equation}
    \left| \varepsilon_k \right| \leq \frac{1}{CK}, \quad k=1,\dots,K,
  \end{equation}
  for $N \geq N_1$. 
  Finally, this implies 
  \begin{equation}
    \sum_{k=1}^K \varepsilon_k \pi_k(\mathbf{x}_n;\mathbf{r}) 
      \leq 1
  \end{equation}
  and therefore the assertion.
\end{proof}
 
\section{Connection to Other Cubature Formulas}
\label{sec:connection}

In what follows, connections of the proposed $\ell^1$- and LS-CF to some well-known CFs are discussed.  

\begin{remark}[Minimum Norm CFs]
	The proposed approach might be best compared to \emph{relative minimum-norm} CFs; 
	see \cite[Chapter 4 and 5]{sard1949best} or the review \cite{haber1970numerical} and references therein.
	Given a fixed set of data points $X$, these are constructed by considering the integration error ${I[f]-C_N[f]}$ as a linear functional and minimizing its operator norm. 
	In comparison, here we aim to minimize the operator norm of the CF considered as a linear functional, 
	\begin{equation} 
		C_N: ( \R^N, \|\cdot\| ) \to ( \R, |\cdot| ), \quad 
		\mathbf{f} \mapsto \mathbf{w} \cdot \mathbf{f} = \sum_{n=1}^N w_n f(\mathbf{x}_n).
	\end{equation}
	If $\| \cdot \| = \| \cdot \|_\infty$, the operator norm of $C_N$ is $\kappa(\mathbf{w})$, 
	resulting in the $\ell^1$-CFs. 
	For $\| \cdot\| = \|R^{-1/2} \cdot \|_2$, on the other hand, the operator norm of $C_N$ is $\|R^{-1/2} \mathbf{w}\|_2$. 
	This yields the LS-CFs.
\end{remark}

\begin{remark}[Alternative Stability Measures] 
\label{rem:other-stab-measures}
	Other choices for $\| \cdot \|$ are possible as well. 
	Note that by H\"older's inequality, 
	\begin{equation} 
		| C_N[f] | \leq \norm{ \mathbf{w} }_p \norm{ \mathbf{f} }_q
	\end{equation} 
	for $1 < p,q < \infty$ with $1/p + 1/q = 1$. 
	Equality holds if and only if the vectors $|\mathbf{w}|^p$ and $|\mathbf{f}|^q$ are linearly dependent. 
	Here, $|\mathbf{v}|^p = (|v_1|^p, \dots, |v_N|^p )^T$ for $\mathbf{v} \in \R^N$.
	The operator norm of $C_N$ is therefore given by $\|\mathbf{w}\|_p$. 
	In this work, however, only the cases $p = 1$ and $p = 2$ are considered.  
\end{remark}

\begin{remark}[Monte Carlo CFs]
	The LS-CF \eqref{eq:LS-CF} can be seen as a high-order correction to (Q)MC methods \cite{metropolis1949monte,caflisch1998monte,dick2013high}.
	In these, the data points are obtained by (uniform) random samples and the weights are simply $w_n = |\Omega| \omega(\mathbf{x}_n) / N$. 
	At the same time, recall (see \S \ref{sub:char}) that the LS weights are explicitly given by 
	$w_n^{\mathrm{LS}} = r_n \sum_{k=1}^K \pi_k( \mathbf{x}_n ; \mathbf{r}) I[ \pi_k( \, \cdot \, ; \mathbf{r} ) ]$, where $\{ \pi_k( \, \cdot \, ; \mathbf{r}) \}_{k=1}^K$ is a basis of DOPs. 
	For fixed $K$ and an increasing number of data points, $\pi_k( \mathbf{x}_n ; \mathbf{r}) I[ \pi_k( \, \cdot \, ; \mathbf{r} ) ]$ converges to the Kronecker delta $\delta_{1,k}$. 
	Hence, the difference between the (Q)MC and LS weights converges to zero. 
\end{remark}

\begin{remark}[Exact Integration of Discrete LS Approximations]
	The LS-CF $C^{\textit{LS}}_N[f]$ defined as in \eqref{eq:LS-CF} corresponds to exact integration of the following discrete LS approximation of $f$ from $\mathbb{P}_d(\R^q)$: 
	\begin{equation} 
		\hat{f}(x) = \sum_{k=1}^K c_k p_k(x) 
		\quad \text{s.t.} \quad 
		\norm{ R^{1/2} ( P^T \mathbf{c} - \mathbf{f} ) }_2 
		\ \text{is minimized}, 
	\end{equation} 
	where $\mathbf{c} = (c_1,\dots,c_K)^T$.
	That is, $C^{\textit{LS}}_N[f] = I[\hat{f}]$. 
	This can be noted by representing $\hat{f}$ w.\,r.\,t.\ to a basis of DOPs $\{ \pi_k(\cdot;\mathbf{r}) \}_{k=1}^K$ corresponding to the discrete inner product $[\cdot,\cdot]_N$ as in \eqref{eq:disc-inner-prod}. 
	Then, 
	\begin{equation} 
		\hat{f}(x) = \sum_{k=1}^K [ f, \pi_k(\cdot;\mathbf{r}) ]_N \pi_k(x;\mathbf{r}).
	\end{equation} 
	Integration therefore yields 
	\begin{equation} 
		I[\hat{f}] 
			= \sum_{k=1}^K [ f, \pi_k(\cdot;\mathbf{r}) ]_N I[\pi_k(\cdot;\mathbf{r})] 
			= \sum_{n=1}^N \left( \sum_{k=1}^K \pi_k(\mathbf{x}_n;\mathbf{r}) I[\pi_k(\cdot;\mathbf{r})] \right) f(\mathbf{x}_n) 
			= C^{\textit{LS}}_N[f]. 
	\end{equation} 
	The last equality follows from \eqref{eq:LS-sol-explicit}. 
	Building upon this connection, in \cite{migliorati2018stable} high-order CFs for independent random points were constructed. 
	These were shown to be positive with a high probability if the number of (random) data points is sufficiently larger than the DoE. 
	In particular, it was stated that the proportionality between $N$ and $K$ should be at least quadratic. 
	This is in accordance with the results presented here. 
	That said, we do not restrict ourselves to random points in the present manuscript. 
\end{remark}

\begin{remark}[Optimization-based CFs] 
	In the introduction, we mentioned some existing optimization strategies for computing high-order CFs \cite{taylor2000algorithm,taylor2007cardinal,ryu2015extensions,jakeman2018generation,keshavarzzadeh2018numerical}. 
	Besides relying on the ability to have full flexibility in placing data points inside the domain $\Omega$, the success of these often depends on the initial guess for the data points and convergence to an appropriate solution (otherwise, the weights might not always be ensured to be nonnegative). 
	In contrast, considering a fixed and prescribed set, in a sense, makes the corresponding optimization problem simpler. 
	For fixed data points, we 'only' have to optimize the cubature weights, rather than also optimizing for the data points. 
	At the same time, this seeming simplification comes with a new question: 
	``What is telling us that the fixed and prescribed set of data points actually supports a nonnegative and exact CF for some $d>0$?". 
	However, this is ensured by Corollary \ref{cor:cor_main}. 
	Roughly rephrasing this corollary, for any $d>0$ the solutions of the optimization problems \eqref{eq:l1-sol} and \eqref{eq:LS-sol}---corresponding to the weights of the LS- and $\ell^1$-CF---are nonnegative if a sufficiently large set of data points satisfying \eqref{eq:cond4} is considered. 
	That said, for determining the weights corresponding to the $\ell^1$-CF, we still rely on the numerical optimization method that is used to solve the basis pursuit problem \eqref{eq:l1-sol} to actually converge to one of these solutions.
\end{remark} 
\section{Efficient Construction and Implementation}
\label{sec:constr}

Below, some comments on computational aspects of the proposed LS- and $\ell^1$-CFs are provided. 
The Matlab code that was used to produce the subsequent numerical tests can be found at \cite{glaubitz2020github}.

\subsection{Computation of the Moments} 

In our implementation, we computed the matrix $P$ from a basis of monomials. 
For many domains and weight functions the corresponding moments can be found in the literature; see \cite{haber1970numerical,cools1997constructing,folland2001integrate,davis2007methods} and references therein.
In case the monomial's moments are not known, they can still be computed, for instance, by using some available CF on $\Omega$. 
Note that for the evaluation of the basis functions $\{p_k\}_{k=1}^K$, one is not limited to the set of data points.

\subsection{Finding Stable Cubature Formulas}

Algorithms \ref{algo:l1-CF} and \ref{algo:LS-CF} describe a simple procedure to determine nonnegative $\ell^1$- and LS-CFs. 
The idea behind these is to start with the highest possible $d$ such that the set of data points is still $\mathbb{P}_d(\R^q)$-unisolvent. 
Then, the DoE $d$ is decreased until the resulting CF is also ensured to be nonnegative.  
In the later numerical tests, however, the final DoE of these CFs is found to be significantly smaller than the highest possible $d$ such that the set of data points is $\mathbb{P}_d(\R^q)$-unisolvent. 
This is in accordance with prior works in one dimension \cite{wilson1970necessary,huybrechs2009stable,glaubitz2020shock,glaubitz2020stable}.
Hence, instead of Algorithms \ref{algo:l1-CF} and \ref{algo:LS-CF}, we utilized the following (more efficient) algorithm to construct $\ell^1$- and LS-CFs in our implementation. 

\begin{algorithm}
\caption{Efficient Construction of $\ell^1$- and LS-CFs}
\label{algo:CF-efficient}
\begin{algorithmic}[1]
    \State{$d, r, K, w_{\text{min}} = 0$} 
    \While{$r = K$ and $w_{\text{min}} \geq 0$} 
      \State{Increase the DoE: $d = d+1$} 
      \State{$K = \binom{d+q}{q}$}
      \State{Formulate the linear system $P \mathbf{w} = \mathbf{m}$ for DoE $d$} 
      \State{Compute the rank of $P$: $r = \text{rank}(P)$}
      \State{Compute the $\ell^1$/LS-solution $\mathbf{w}^{*}$} 
      \State{Determine the smallest weight: $w_{\text{min}} = \min( \mathbf{w}^{*} )$}
    \EndWhile 
    \State{Decrease the DoE: $d = d-1$} 
      \State{Formulate the linear system $P \mathbf{w} = \mathbf{m}$ for DoE $d$} 
      \State{Compute the $\ell^1$/LS-solution $\mathbf{w}^{*}$}
    \State{Approximate $I[f]$ by the corresponding $\ell^1$/LS-CF} 
\end{algorithmic}
\end{algorithm} 

Note that $\text{rank}(P) = K$ with $K = \binom{d+q}{q}$ is equivalent to $X$ being $\mathbb{P}_d(\R^q)$-unisolvent.

\subsection{Computation of the $\ell^1$- and LS-Solution} 

Recall that the $\ell^1$-solution is defined by the basis pursuit problem \eqref{eq:l1-sol}.
This problem can be rewritten as a linear programming problem \cite{bloomfield1983least,dantzig1998linear,dantzig2006linear,gill1991numerical,vanderbei2020linear}: 
\begin{equation}\label{eq:LP}
	\min_{\mathbf{w} \in \R^N} \mathbf{1}^T \mathbf{w} 
	\quad \text{s.t.} \quad 
	P \mathbf{w} = \mathbf{m}, \quad 
	\mathbf{w} \geq 0
\end{equation}
In our implementation, we therefore compute the $\ell^1$-solution based on \eqref{eq:LP} by  MATLAB's function \emph{linprog}. 
By default, this function uses a dual simplex algorithm. 
The LS solution \eqref{eq:LS-sol}, on the other hand, is computed by Matlab's function \emph{lsqminnorm}. 
This function uses a pivoted QR decomposition of $A = P R^{1/2}$; see \cite{trefethen1997numerical,golub2012matrix,horn2012matrix,strang2019linear}. 
\section{Numerical Results} 
\label{sec:tests}

In this section, the proposed stable high-order CFs are numerically investigated for a variety of different test cases. 
Among these are the (hyper-)cube ${C_q = [-1,1]^q}$ and ball ${B_q = \{ \mathbf{x} \in \R^q \mid \| \mathbf{x} \|_2 \leq 1 \}}$ in two and three dimensions ($q=2,3$). 
Here, $\|\mathbf{x}\|_2^2 = x_1^2 + \dots + x_q^2$ for $\mathbf{x} = (x_1,\dots,x_q)^T$. 
The domain's volume is respectively given by $|C_q| = 2^q$ and ${|B_q| = (\pi^\frac{q}{2})/\Gamma\left( \frac{q}{2}+1 \right)}$.
Here, $\Gamma$ denotes the usual gamma function \cite[Chapter 5]{dlmf2020digital}. 
The moments of the monomials corresponding to different weight functions can be found in \ref{sec:moments}. 

\begin{figure}[tb]
  \centering
  \begin{subfigure}[b]{0.32\textwidth}
    \includegraphics[width=\textwidth]{%
      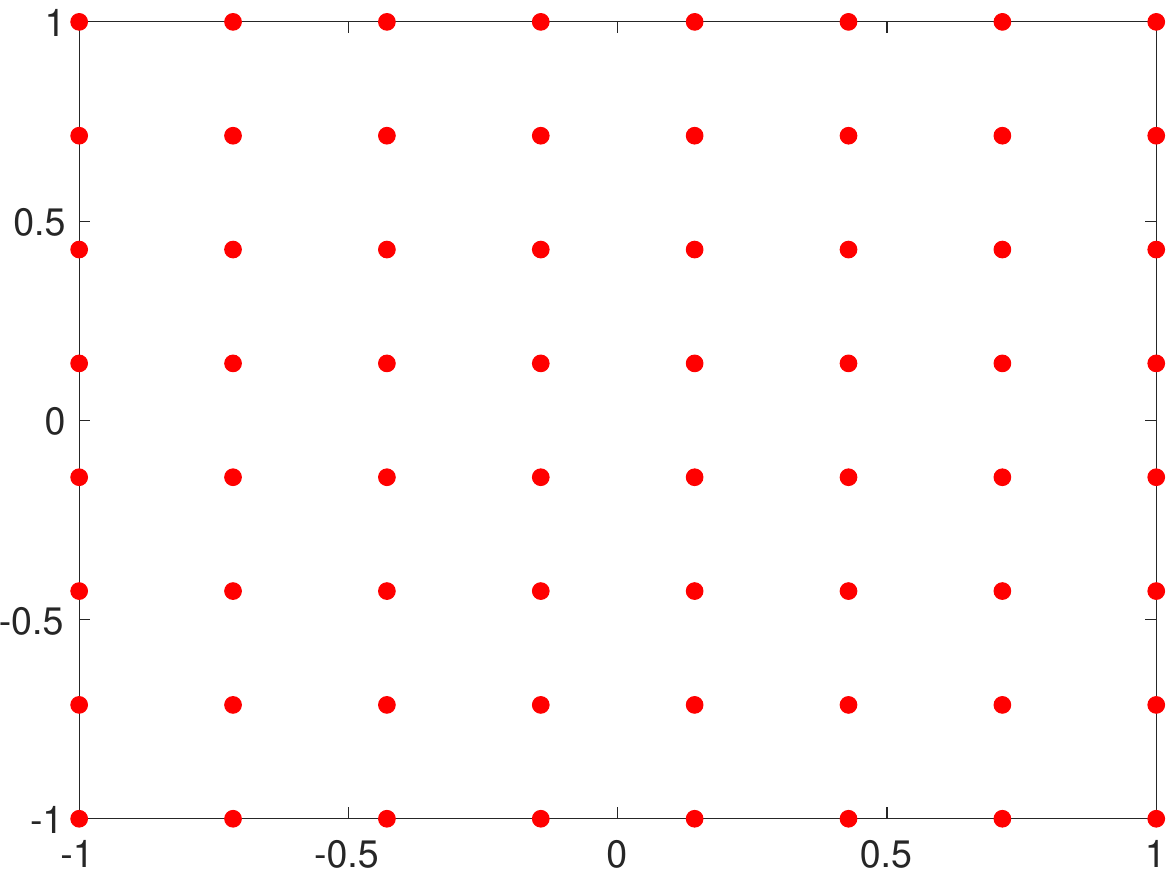} 
    \caption{Equidistant points}
    \label{fig:points_equid}
  \end{subfigure}%
  ~
  \begin{subfigure}[b]{0.32\textwidth}
    \includegraphics[width=\textwidth]{%
      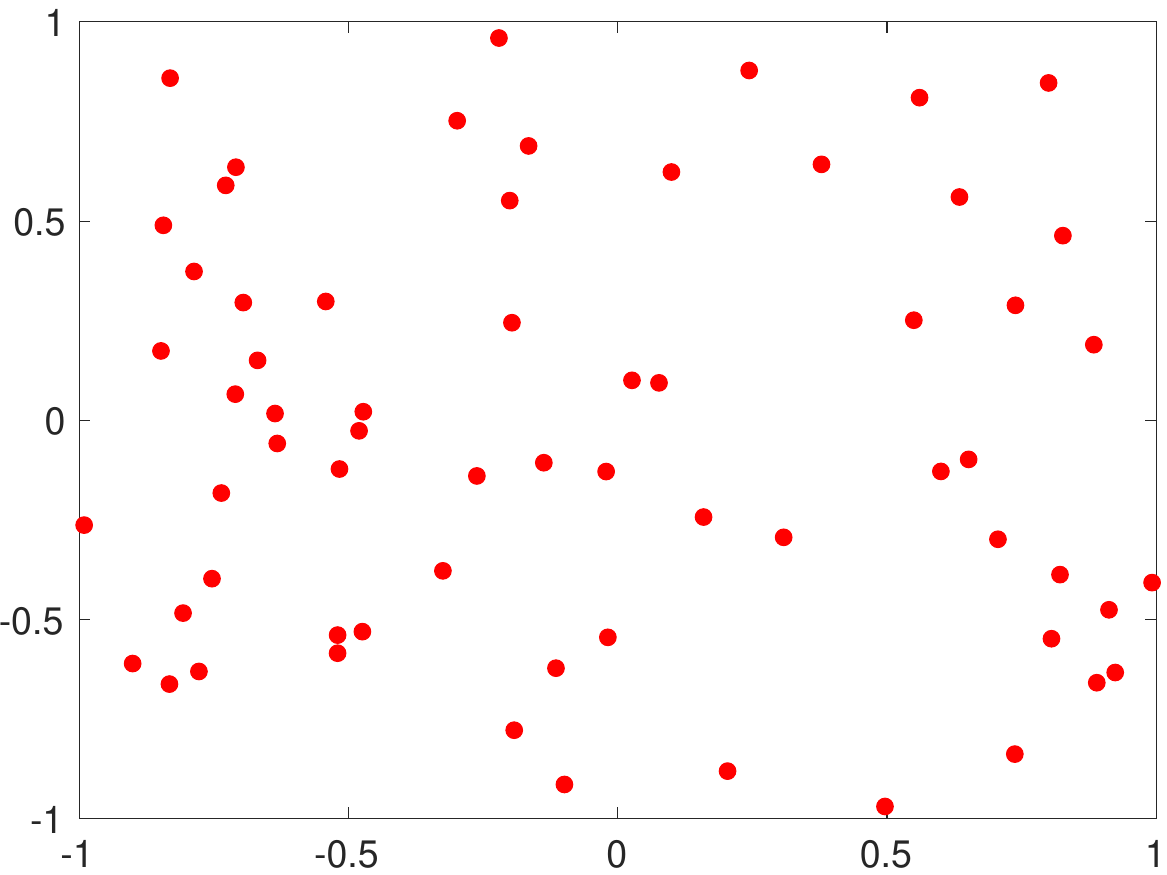} 
    \caption{Random points}
    \label{fig:points_uniform}
  \end{subfigure}%
  ~
  \begin{subfigure}[b]{0.32\textwidth}
    \includegraphics[width=\textwidth]{%
      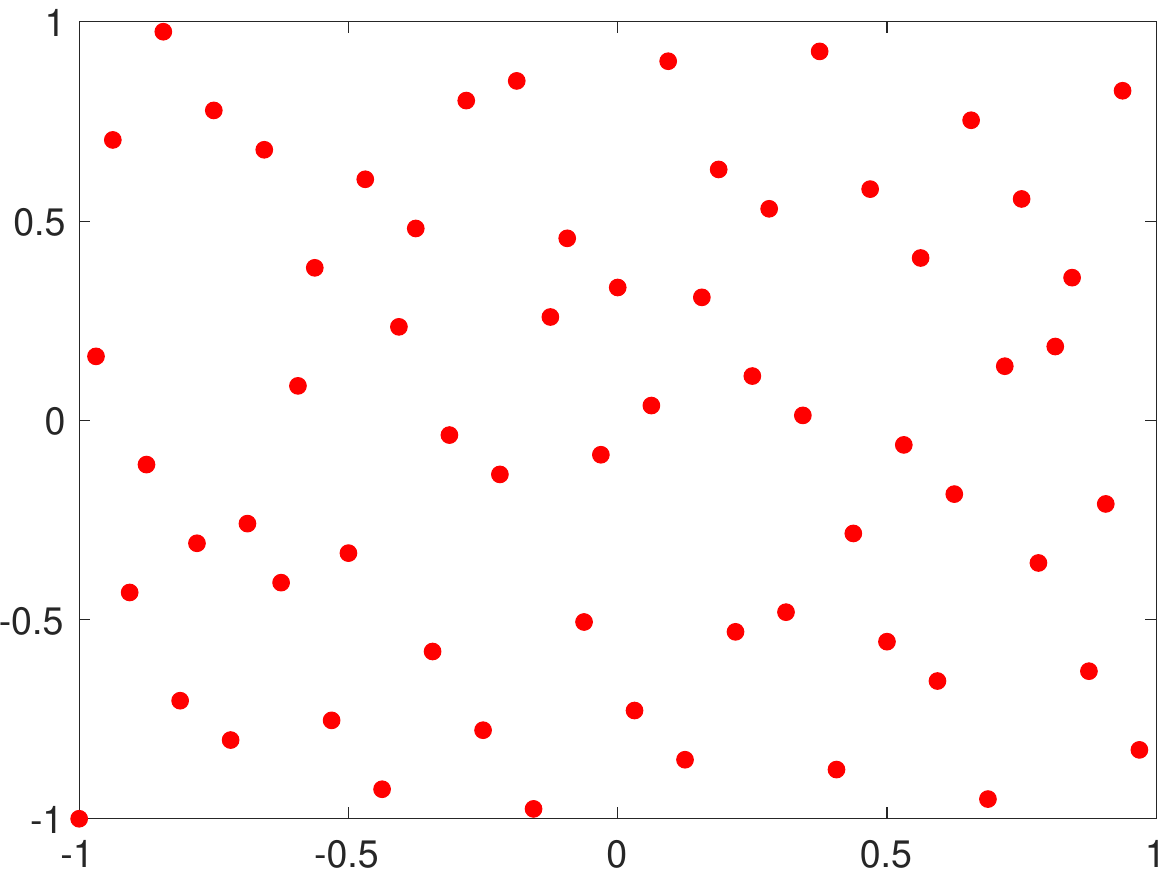} 
    \caption{Halton points}
    \label{fig:points_Halton}
  \end{subfigure}%
  \caption{Illustration of three different types of $N=64$ data points for the two-dimensional cube $C_2 = [-1,1]^2$}
  \label{fig:points}
\end{figure}

Furthermore, three different types of data points are considered: 
(1) equidistant points, which are fully deterministic; 
(2) Random points that are independent and identically distributed (i.\,i.\,d.)\ according to the uniform distribution $\mathcal{U}([-1,1]^q)$; and
(3) Halton points \cite{halton1960efficiency}, which are quasi-random and belong to the family of so-called \emph{low discrepancy sequences}.\footnote{The Halton points generalize the one-dimensional van der Corput points; see \cite[Erste Mitteilung]{van1935verteilungsfunktionen} or \cite{kuipers2012uniform}.}
Such points are developed to minimize the upper bound provided by the Koksma--Hlawka theorem \cite{hlawka1961funktionen,niederreiter1992random}. 
They yield the rate of convergence of the MC method to increase from $1/2$ to essentially $1$; see \cite{caflisch1998monte,dick2013high,trefethen2017cubature} and references therein. 
An illustration of these points in two dimensions is provided by Figure \ref{fig:points}. 
For the corresponding (hyper-)ball, the subset of points with radius not greater than $1$ is selected.

\subsection{The Ratio Between $N$ and $d$} 
\label{sub:ratio}

First, the ratio between the number of data points $N$ and the number $K$ of basis functions spanning $\mathbb{P}_d(\R^q)$ is investigated.
Recall that $K = \binom{d+q}{q}$. 
Hence, the asymptotic relation $K \sim d^q/(q!)$ holds. 
That is, $\lim_{d \to \infty} K / d^q = 1/q!$; see \cite{de1981asymptotic}.

\begin{table}[!tb]
\renewcommand{\arraystretch}{1.25}
\centering 
  	\begin{tabular}{c c c c c c c c c c c} 
    	\multicolumn{11}{c}{LS-CF on the Cube} \\ \hline 
    \multicolumn{3}{c}{} & \multicolumn{4}{c}{$\omega \equiv 1$} & & \multicolumn{3}{c}{$\omega(\boldsymbol{x}) = (1-x_1^2)^{1/2} \dots (1-x_q^2)^{1/2}$} \\ \hline 
    $q$ &\multicolumn{2}{c}{} & Legendre & equidistant & random & Halton & & equidistant & random & Halton \\ \hline 
	$2$ & s  & & 1.9	  	& 1.9   		&  9.0e-1  & 1.3     	& & 1.7      	& 1.3 		& 1.3 \\
    		   & C & & 3.0e-1 	& 4.4e-1	& 2.3e+1	& 1.4			& & 5.1e-1	& 9.6    		& 1.0 \\ \hline 
	$3$ & s  & & 2.8 		& 1.1			& 2.1			& 1.6			& & 1.6			& 8.8e-1	& 1.5 \\
    		   & C & & 2.1e-1 	& 9.0e+1	& 2.3e-1	& 2.2e-1	& & 2.2			& 6.9e+1	&  5.9e-1 \\ \hline 
		   \\	  
	\multicolumn{11}{c}{$\ell^1$-CF on the Cube} \\ \hline 
    \multicolumn{3}{c}{} & \multicolumn{4}{c}{$\omega \equiv 1$} & & \multicolumn{3}{c}{$\omega(\boldsymbol{x}) = (1-x_1^2)^{1/2} \dots (1-x_q^2)^{1/2}$} \\ \hline 
    $q$ &\multicolumn{2}{c}{} & Legendre & equidistant & random & Halton & & equidistant & random & Halton \\ \hline 
	$2$ & s  & & 1.9		& 1.5      	& 1.4 		& 1.6			& & 1.7 		& 1.6       	& 1.0 \\
    		   & C & & 3.0e-1	& 3.6e-1	& 7.4e-1 	& 2.7e-1  	& & 2.0e-1	& 2.0e-1	& 3.6 \\ \hline 
	$3$ & s  & & 2.8 		& 1.5			& 1.7			& 1.7			& & 1.5			& 9.4e-1	& 1.8 \\
    		   & C & & 2.1e-1 	& 3.9e-1	& 9.3e-2	& 5.3e-2	& & 9.4e-1	& 8.1			& 2.6e-2 \\ \hline 
		   \\	  
	\end{tabular} 
  	\begin{tabular}{c c c c c c c c c c} 
    	\multicolumn{10}{c}{LS-CF on the Ball} \\ \hline 
    \multicolumn{3}{c}{} & \multicolumn{3}{c}{$\omega \equiv 1$} & & \multicolumn{3}{c}{$\omega(\boldsymbol{x}) = \sqrt{ \|\boldsymbol{x}\|_2}$} \\ \hline 
    $q$ &\multicolumn{2}{c}{} & equidistant & random & Halton & & equidistant & random & Halton \\ \hline 
	$2$ & s  & & 1.4 		& 8.8e-1 	& 1.8 		& & 1.4 		& 8.7e-1 	& 1.6 \\
    		   & C & & 1.2 		& 2.0e+1	& 2.3e-1	& & 1.2			& 2.2e+1	& 4.6e-1 \\ \hline 
	$3$ & s  & & 5.4e-1	& 1.0 		& 1.0			& & 4.6e-1	& 1.0			& 1.0 \\
    		   & C & & 5.0e+1	& 7.9			& 7.3			& & 7.5e+1	& 7.9			& 7.7 \\ \hline 
		   \\	  
    	\multicolumn{10}{c}{$\ell^1$-CF on the Ball} \\ \hline 
    \multicolumn{3}{c}{} & \multicolumn{3}{c}{$\omega \equiv 1$} & & \multicolumn{3}{c}{$\omega(\boldsymbol{x}) = \sqrt{ \|\boldsymbol{x}\|_2}$} \\ \hline 
    $q$ &\multicolumn{2}{c}{} & equidistant & random & Halton & & equidistant & random & Halton \\ \hline 
	$2$ & s  & & 1.5 		& 1.5       	& 1.1			& & 1.8 		&1.4			& 1.9 \\
    		   & C & &  5.4e-1 	& 5.6e-1	& 3.2			& & 1.8e-1 	& 9.4e-1	& 7.2e-2 \\ \hline 
	$3$ & s  & & 7.6e-1	& 1.0			& 1.6			& & 6.8e-1	& 1.5			& 1.2 \\
    		   & C & & 1.4e+1	& 3.2			& 1.5e-1	& & 2.2e+1	& 2.9e-1	& 1.3 \\ \hline 
\end{tabular} 
\caption{LS fit for the parameters $C$ and $s$ in the model $N = C K^s$}
\label{tab:LS-fit}
\end{table}

\begin{figure}[tb]
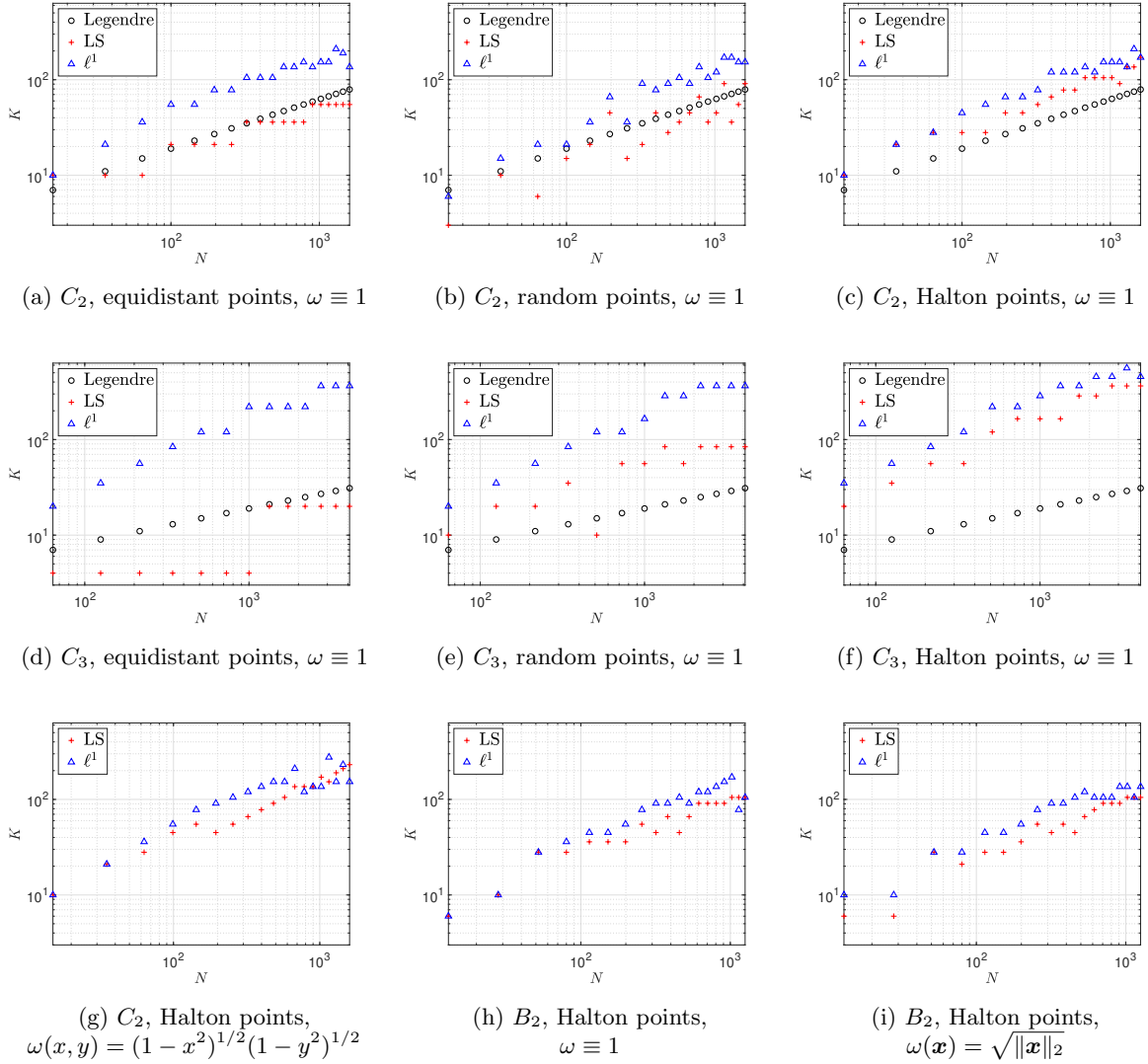

  \centering 
  \captionsetup{justification=centering}
  \begin{subfigure}[b]{0.33\textwidth}
    \includegraphics[width=\textwidth]{%
      plots/ratio_dim2_cube_1_equid} 
    \caption{$C_2$, equidistant points, $\omega \equiv 1$}
    \label{fig:ratio_cube_equid_dim2}
  \end{subfigure}%
  ~
  \begin{subfigure}[b]{0.33\textwidth}
    \includegraphics[width=\textwidth]{%
      plots/ratio_dim2_cube_1_uniform} 
    \caption{$C_2$, random points, $\omega \equiv 1$}
    \label{fig:ratio_cube_uniform_dim2}
  \end{subfigure}%
  ~
  \begin{subfigure}[b]{0.33\textwidth}
    \includegraphics[width=\textwidth]{%
      plots/ratio_dim2_cube_1_Halton} 
    \caption{$C_2$,  Halton points, $\omega \equiv 1$}
    \label{fig:ratio_cube_Halton_dim2}
  \end{subfigure}%
  \\ 
  \begin{subfigure}[b]{0.33\textwidth}
    \includegraphics[width=\textwidth]{%
      plots/ratio_dim3_cube_1_equid} 
    \caption{$C_3$, equidistant points, $\omega \equiv 1$}
    \label{fig:ratio_cube_equid_dim3}
  \end{subfigure}%
  ~
  \begin{subfigure}[b]{0.33\textwidth}
    \includegraphics[width=\textwidth]{%
      plots/ratio_dim3_cube_1_uniform} 
    \caption{$C_3$, random points, $\omega \equiv 1$}
    \label{fig:ratio_cube_uniform_dim3}
  \end{subfigure}%
  ~
  \begin{subfigure}[b]{0.33\textwidth}
    \includegraphics[width=\textwidth]{%
      plots/ratio_dim3_cube_1_Halton} 
    \caption{$C_3$, Halton points, $\omega \equiv 1$}
    \label{fig:ratio_cube_Halton_dim3}
  \end{subfigure}%
  \\ 
  \begin{subfigure}[b]{0.33\textwidth}
    \includegraphics[width=\textwidth]{%
      plots/ratio_dim2_cube_C2k_Halton} 
    \caption{$C_2$, Halton points, \\ $\omega(x,y) = (1-x^2)^{1/2} (1-y^2)^{1/2}$}
    \label{fig:ratio_cube_Halton_dim2_C2k}
  \end{subfigure}%
  ~
  \begin{subfigure}[b]{0.33\textwidth}
    \includegraphics[width=\textwidth]{%
      plots/ratio_dim2_ball_1_Halton} 
    \caption{$B_2$, Halton points, \\ $\omega \equiv 1$}
    \label{fig:ratio_ball_Halton_dim2_1}
  \end{subfigure}%
  ~
  \begin{subfigure}[b]{0.33\textwidth}
    \includegraphics[width=\textwidth]{%
      plots/ratio_dim2_ball_sqrt_Halton} 
    \caption{$B_2$, Halton points, \\ $\omega(\boldsymbol{x}) = \sqrt{ \|\boldsymbol{x}\|_2}$}
    \label{fig:ratio_ball_Halton_dim2_sqrt}
  \end{subfigure}%
  \caption{$N$ versus $K$ for $C_2$, $C_3$, and $B_2$}
  \label{fig:ratio}
\end{figure}

Table \ref{tab:LS-fit} and Figure \ref{fig:ratio} report on the relation between $N$ and $K$ for the LS- and $\ell^1$-CF for $C_q$ and $B_q$ with $q=2,3$. 
The cube $C_q$ is considered together with the weight functions $\omega \equiv 1$ and ${\omega(\boldsymbol{x}) = \prod_{i=1}^q \sqrt{ 1 - x_i^2 }}$ (corresponding to products of  \emph{Chebyshev functions of second kind} \cite[Chapter 18]{dlmf2020digital}; apart from a multiplicative constant, also known as the \emph{Wigner semicircle distribution}).
For the ball $B_q$, the weight functions $\omega \equiv 1$ and ${ \omega(\boldsymbol{x}) = \sqrt{ \|\boldsymbol{x}\|_2} }$ are considered. 
The relation between $N$ and $K$ is assumed to be of the form $N \approx C K^s$. 
Here, the values for $s$ and $C$ have been determined numerically by performing an LS fit for the constants $C$ and $s$. 
Note that in almost all cases the $\ell^1$-CF achieves at least the same DoE---if not even a higher---compared to the LS-CF. 
This is in accordance with the $\ell^1$ weights to minimize the stability measure $\kappa$ while the LS weights minimize a weighted $2$-norm. 
Furthermore, for the cube and the weight function $\omega \equiv 1$, the asymptotic ratio of the LS- and $\ell^1$-CF is compared with the one of the product Legendre rule. 
This rule is known to provide DoE $d = 2n-1$ if $n$ Legendre points are used in every direction ($N=n^q$). 
Hence, $N \sim 2^{-q} d^q$ and therefore $N \sim q! 2^{-q} K$ for the product Legendre rule.  
We observe that the LS-CF and $\ell^1$-CF are observed to yield a smaller parameter $s$ than the product Legendre rule in many cases. 
This indicates that---asymptotically---these formulas require a smaller number of data points to achieve the same DoE. 
It should be stressed, however, that Cartesian products of Legendre (as well as other Gaussian) formulas appear to be more accurate in practice than one might expect based on their (total) DoE. 
This might be related to the fact that these formulas are not only exact for polynomials of total degree at most $d$, but also many others. 
Already in \cite[Remark 2]{haber1970numerical} it was argued that total DoE might not be suitable in higher dimensions: "Perhaps one should consider sets of monomials having the property that whenever $(x^1)^{n_1} \dots (x^s)^{n_s}$ is in the set, so is $(x^1)^{m_1} \dots (x^s)^{m_s}$ if $0 \leq m_i \leq n_i$, $i=1,\dots,s$;". 
Recently, this discussion has been revitalized in \cite{trefethen2017cubature,trefethen2017multivariate,trefethen2021exactness}. 
There, it was proposed to construct CFs on the hypercube based on the Euclidean rather than the total degree. 
Future work might provide a numerical investigation of this in the context of $\ell^1$- and LS-CFs. 

\newpage
\subsection{Accuracy for Exact Data}
\label{sub:num_exact}

Next, the accuracy of the proposed LS- and $\ell^1$-CF for two different test cases without any noise is investigated. 

\begin{figure}[tb]
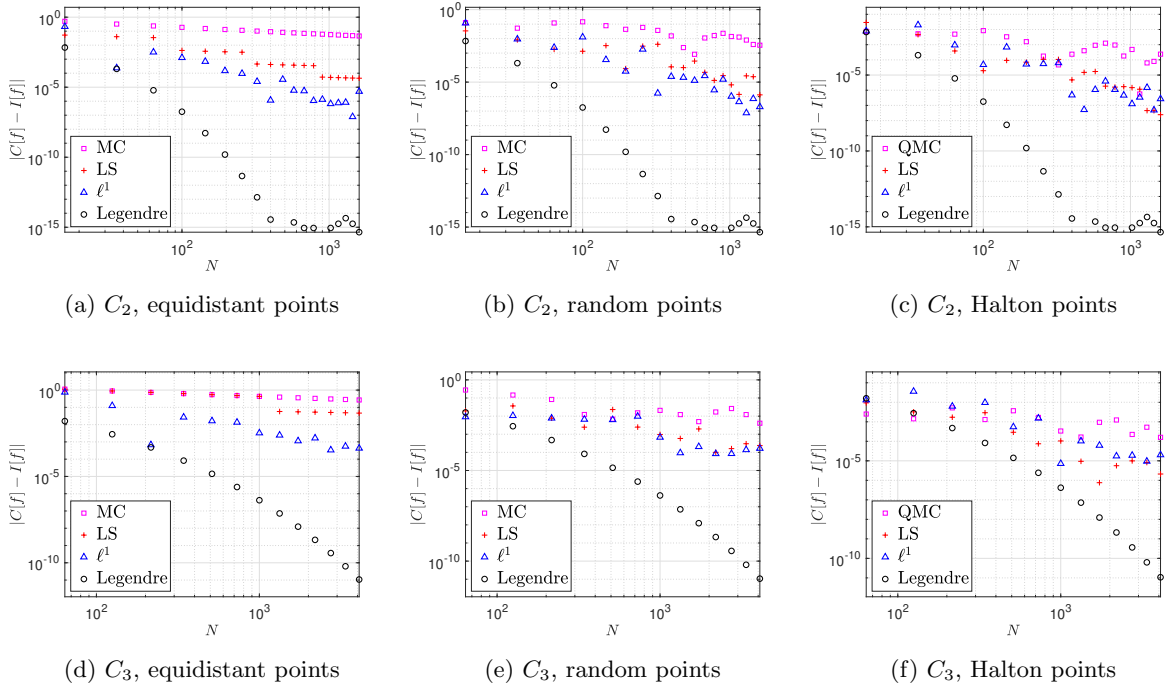

  \centering 
  \captionsetup{justification=centering}
  \begin{subfigure}[b]{0.33\textwidth}
    \includegraphics[width=\textwidth]{%
      plots/accuracy_test1_dim2_equid} 
    \caption{$C_2$, equidistant points}
    \label{fig:accuracy_test1_dim2_equid}
  \end{subfigure}%
  ~
  \begin{subfigure}[b]{0.33\textwidth}
    \includegraphics[width=\textwidth]{%
      plots/accuracy_test1_dim2_uniform} 
    \caption{$C_2$, random points}
    \label{fig:accuracy_test1_dim2_uniform}
  \end{subfigure}%
  ~
  \begin{subfigure}[b]{0.33\textwidth}
    \includegraphics[width=\textwidth]{%
      plots/accuracy_test1_dim2_Halton} 
    \caption{$C_2$, Halton points}
    \label{fig:accuracy_test1_dim2_Halton}
  \end{subfigure}%
  \\
  \begin{subfigure}[b]{0.33\textwidth}
    \includegraphics[width=\textwidth]{%
      plots/accuracy_test1_dim3_equid} 
    \caption{$C_3$, equidistant points}
    \label{fig:accuracy_test1_dim3_equid}
  \end{subfigure}%
  ~
  \begin{subfigure}[b]{0.33\textwidth}
    \includegraphics[width=\textwidth]{%
      plots/accuracy_test1_dim3_uniform} 
    \caption{$C_3$, random points}
    \label{fig:accuracy_test1_dim3_uniform}
  \end{subfigure}%
  ~
  \begin{subfigure}[b]{0.33\textwidth}
    \includegraphics[width=\textwidth]{%
      plots/accuracy_test1_dim3_Halton} 
    \caption{$C_3$, Halton points}
    \label{fig:accuracy_test1_dim3_Halton}
  \end{subfigure}%
  \caption{Errors for $C_2$ and $C_3$ with $\omega$ and $f$ as in \eqref{eq:test1}.}
  \label{fig:accuracy_test1}
\end{figure}

\begin{figure}[tb]
  \centering 
  \captionsetup{justification=centering}
  \begin{subfigure}[b]{0.33\textwidth}
    \includegraphics[width=\textwidth]{%
      plots/accuracy_test2_dim2_equid} 
    \caption{$B_2$, equidistant points}
    \label{fig:accuracy_test2_dim2_equid}
  \end{subfigure}%
  ~
  \begin{subfigure}[b]{0.33\textwidth}
    \includegraphics[width=\textwidth]{%
      plots/accuracy_test2_dim2_uniform} 
    \caption{$B_2$, random points}
    \label{fig:accuracy_test2_dim2_uniform}
  \end{subfigure}%
  ~
  \begin{subfigure}[b]{0.33\textwidth}
    \includegraphics[width=\textwidth]{%
      plots/accuracy_test2_dim2_Halton} 
    \caption{$B_2$, Halton points}
    \label{fig:accuracy_test2_dim2_Halton}
  \end{subfigure}%
  \\
  \begin{subfigure}[b]{0.33\textwidth}
    \includegraphics[width=\textwidth]{%
      plots/accuracy_test2_dim3_equid} 
    \caption{$B_3$, equidistant points}
    \label{fig:accuracy_test2_dim3_equid}
  \end{subfigure}%
  ~
  \begin{subfigure}[b]{0.33\textwidth}
    \includegraphics[width=\textwidth]{%
      plots/accuracy_test2_dim3_uniform} 
    \caption{$B_3$, random points}
    \label{fig:accuracy_test2_dim3_uniform}
  \end{subfigure}%
  ~
  \begin{subfigure}[b]{0.33\textwidth}
    \includegraphics[width=\textwidth]{%
      plots/accuracy_test2_dim3_Halton} 
    \caption{$B_3$, Halton points}
    \label{fig:accuracy_test2_dim3_Halton}
  \end{subfigure}%
  \caption{Errors for $B_2$ and $B_3$ with $\omega$ and $f$ as in \eqref{eq:test2}.}
  \label{fig:accuracy_test2}
\end{figure}

In the first test case, we consider $B_2$ and $B_3$ with weight function $\omega$ and test function $f$ given by 
\begin{equation}\label{eq:test1}
	\omega \equiv 1, \quad 
	f(\boldsymbol{x}) = \frac{1}{(1 + x_1^2) \dots (1 + x_q^2)}, 
	\quad q=2,3.
\end{equation} 
In the second test case we consider $C_2$, $C_3$ together with 
\begin{equation}\label{eq:test2}
	\omega(\boldsymbol{x}) = \sqrt{ \|\boldsymbol{x}\|_2}, \quad 
	f(\boldsymbol{x}) = \frac{1}{1 + \|\boldsymbol{x}\|_2^2} + \sin(x_1).
\end{equation}
The results for equidistant, random, and Halton points are reported in Figure \ref{fig:accuracy_test1} (first test case) and Figure \ref{fig:accuracy_test2} (second test case).   
Besides the $\ell^1$- and LS-CF also the (quasi-)MC method (denoted by QMC for Halton points and MC otherwise) as well as the (transformed) Cartesian product Legendre formula are considered. 
The (quasi-)MC method is applied to the same set of data points as the LS- and $\ell^1$-CF. 
The (transformed) Cartesian product Legendre formula, on the other hand, requires a specific set of data points and is only included to provide a reference.

\subsection{Accuracy for Noisy Data}

The same test as in \S \ref{sub:num_exact}is considered. 
Yet, i.\,i.\,d.\ uniform noise supported on $[-10^{-6},10^{-6}]$ is added to the function values.
That is, noisy data $\mathbf{f}^{\epsilon}$ given by 
\begin{equation}
	f^{\epsilon}_n = f(\mathbf{x}_n) + Z_n, 
	\quad Z_n \in \mathcal{U}( -10^{-6},10^{-6} ), 
	\quad n=1,\dots,N,
\end{equation}
is considered. 
Thus, $\| \mathbf{f} - \mathbf{f}^{\epsilon} \|_{\infty} \leq 10^{-6}$. 
Moreover, the noise is assumed to not be correlated to the data point $\mathbf{x}_n$ or measurement $f(\mathbf{x}_n)$. 
In this case, none of the methods can be expected to yield an accuracy significantly lower than this uniform error bound. 

\begin{figure}[tb]
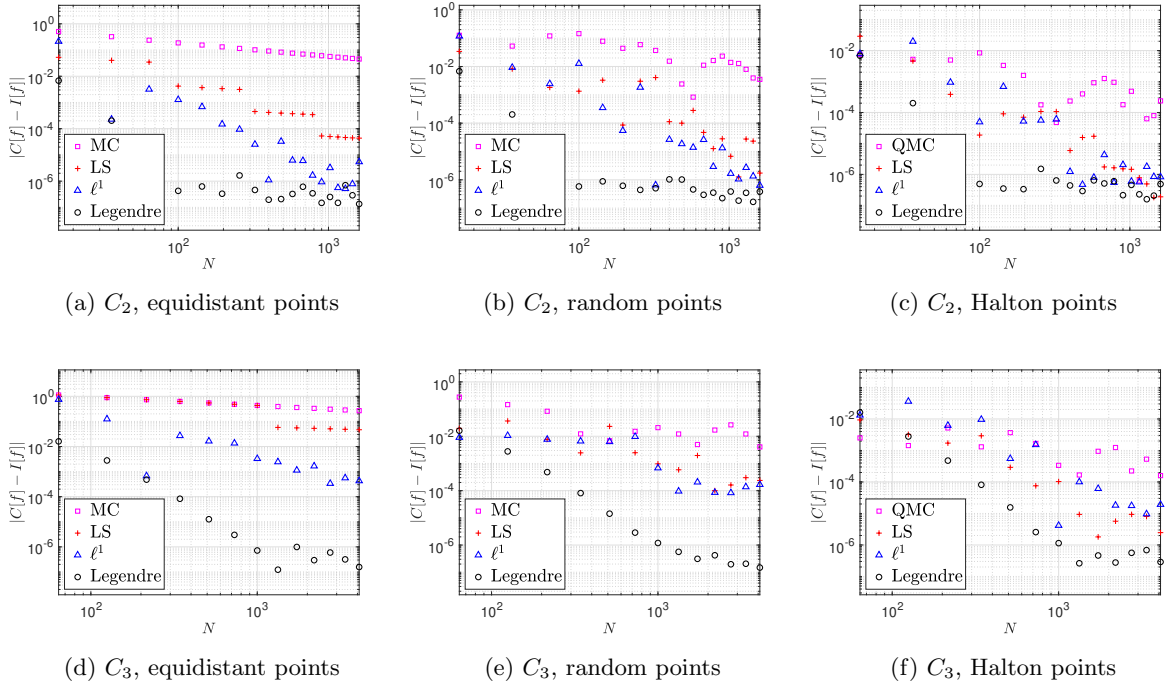

  \centering 
  \captionsetup{justification=centering}
  \begin{subfigure}[b]{0.33\textwidth}
    \includegraphics[width=\textwidth]{%
      plots/accuracy_test1_noisy_dim2_equid} 
    \caption{$C_2$, equidistant points}
    \label{fig:accuracy_test1_noisy_dim2_equid}
  \end{subfigure}%
  ~
  \begin{subfigure}[b]{0.33\textwidth}
    \includegraphics[width=\textwidth]{%
      plots/accuracy_test1_noisy_dim2_uniform} 
    \caption{$C_2$, random points}
    \label{fig:accuracy_test1_noisy_dim2_uniform}
  \end{subfigure}%
  ~
  \begin{subfigure}[b]{0.33\textwidth}
    \includegraphics[width=\textwidth]{%
      plots/accuracy_test1_noisy_dim2_Halton} 
    \caption{$C_2$, Halton points}
    \label{fig:accuracy_test1_noisy_dim2_Halton}
  \end{subfigure}%
  \\
  \begin{subfigure}[b]{0.33\textwidth}
    \includegraphics[width=\textwidth]{%
      plots/accuracy_test1_noisy_dim3_equid} 
    \caption{$C_3$, equidistant points}
    \label{fig:accuracy_test1_noisy_dim3_equid}
  \end{subfigure}%
  ~
  \begin{subfigure}[b]{0.33\textwidth}
    \includegraphics[width=\textwidth]{%
      plots/accuracy_test1_noisy_dim3_uniform} 
    \caption{$C_3$, random points}
    \label{fig:accuracy_test1_noisy_dim3_uniform}
  \end{subfigure}%
  ~
  \begin{subfigure}[b]{0.33\textwidth}
    \includegraphics[width=\textwidth]{%
      plots/accuracy_test1_noisy_dim3_Halton} 
    \caption{$C_3$, Halton points}
    \label{fig:accuracy_test1_noisy_dim3_Halton}
  \end{subfigure}%
  \caption{Errors for $C_2$ and $C_3$ with $\omega$ and $f$ as in \eqref{eq:test1}. 
  I.\,i.\,d.\ uniform noise ${Z_n \in \mathcal{U}( -10^{-6},10^{-6} )}$ was added.}
  \label{fig:accuracy_test1_noisy}
\end{figure}

\begin{figure}[tb]
  \centering 
  \captionsetup{justification=centering}
  \begin{subfigure}[b]{0.33\textwidth}
    \includegraphics[width=\textwidth]{%
      plots/accuracy_test2_noisy_dim2_equid} 
    \caption{$B_2$, equidistant points}
    \label{fig:accuracy_test2_noisy_dim2_equid}
  \end{subfigure}%
  ~
  \begin{subfigure}[b]{0.33\textwidth}
    \includegraphics[width=\textwidth]{%
      plots/accuracy_test2_noisy_dim2_uniform} 
    \caption{$B_2$, random points}
    \label{fig:accuracy_test2_noisy_dim2_uniform}
  \end{subfigure}%
  ~
  \begin{subfigure}[b]{0.33\textwidth}
    \includegraphics[width=\textwidth]{%
      plots/accuracy_test2_noisy_dim2_Halton} 
    \caption{$B_2$, Halton points}
    \label{fig:accuracy_test2_noisy_dim2_Halton}
  \end{subfigure}%
  \\
  \begin{subfigure}[b]{0.33\textwidth}
    \includegraphics[width=\textwidth]{%
      plots/accuracy_test2_noisy_dim3_equid} 
    \caption{$B_3$, equidistant points}
    \label{fig:accuracy_test2_noisy_dim3_equid}
  \end{subfigure}%
  ~
  \begin{subfigure}[b]{0.33\textwidth}
    \includegraphics[width=\textwidth]{%
      plots/accuracy_test2_noisy_dim3_uniform} 
    \caption{$B_3$, random points}
    \label{fig:accuracy_test2_noisy_dim3_uniform}
  \end{subfigure}%
  ~
  \begin{subfigure}[b]{0.33\textwidth}
    \includegraphics[width=\textwidth]{%
      plots/accuracy_test2_noisy_dim3_Halton} 
    \caption{$B_3$, Halton points}
    \label{fig:accuracy_test2_noisy_dim3_Halton}
  \end{subfigure}%
  \caption{Errors for $B_2$ and $B_3$ with $\omega$ and $f$ as in \eqref{eq:test2}.
  I.\,i.\,d.\ uniform noise ${Z_n \in \mathcal{U}( -10^{-6},10^{-6} )}$ was added.}
  \label{fig:accuracy_test2_noisy}
\end{figure}

This is also reflected in the corresponding numerical results reported in Figure \ref{fig:accuracy_test1_noisy} and Figure \ref{fig:accuracy_test2_noisy}. 
For each case, the experiments were repeated $50$ times and the reported errors are averaged. 
For each experiment, a new noise vector was drawn.
Note that all considered CFs behave fairly robust against the introduction of noise. 
This is in accordance with all considered CFs having nonnegative weights.

\subsection{Genz Test Functions} 

Below, some of Genz's test functions \cite{genz1984testing} (also see \cite{van2020adaptive}) are considered. 
Let $\Omega = [0,1]^q$ with $q=1,2$ and $\omega \equiv 1$. 
Then, Genz's tests functions are defined as follows: 
\begin{equation}\label{eq:Genz}
\begin{aligned}
	g_1(\boldsymbol{x}) 
		& = \cos\left( 2 \pi b_1 + \sum_{i=1}^q a_i x_i \right) \quad 
		&& \text{(oscillatory)}, \\
	g_2(\boldsymbol{x}) 
		& = \prod_{i=1}^q \left( a_i^{-2} + (x_i - b_i)^2 \right)^{-1} \quad 
		&& \text{(product peak)}, \\
	g_3(\boldsymbol{x}) 
		& = \left( 1 + \sum_{i=1}^q a_i x_i \right)^{-(q+1)} \quad 
		&& \text{(corner peak)}, \\
	g_4(\boldsymbol{x}) 
		& = \exp \left( - \sum_{i=1}^q a_i^2 ( x_i - b_i )^2 \right) \quad 
		&& \text{(Gaussian)}
\end{aligned}
\end{equation} 
These functions are crafted such that they have different difficult characteristics for numerical integration routines.
The vectors $\mathbf{a} = (a_1,\dots,a_q)^T$ and $\mathbf{b} = (b_1,\dots,b_q)^T$ respectively contain shape and translation parameters. 

\begin{figure}[tb]
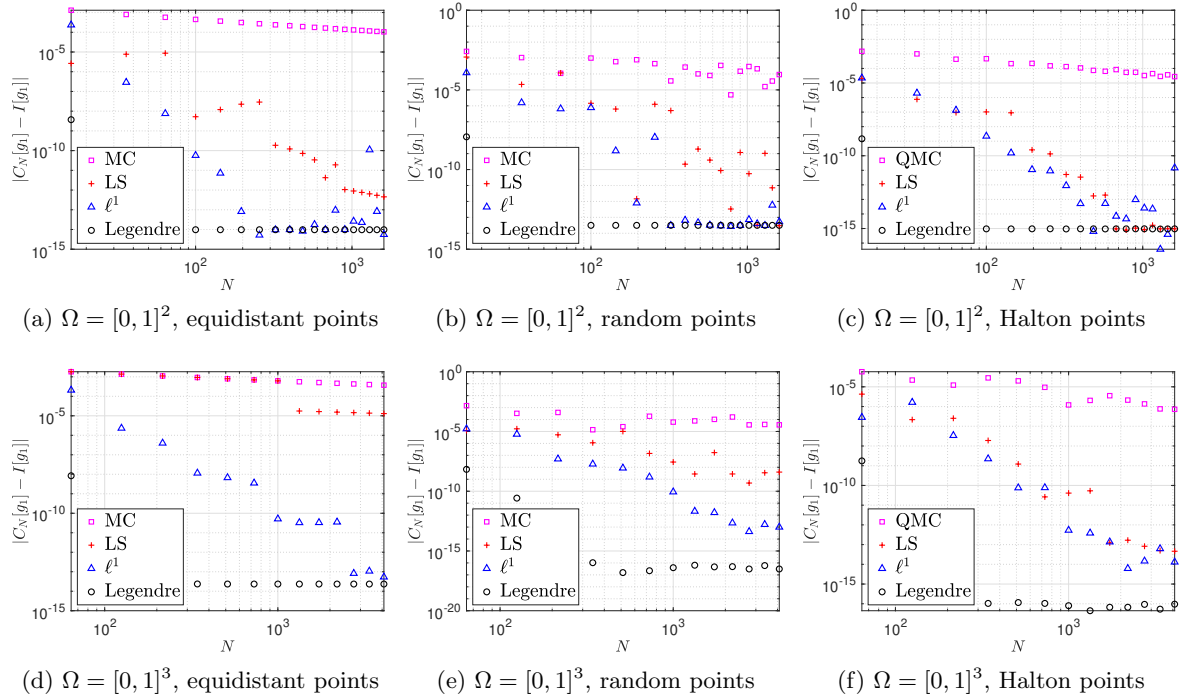

  \centering 
  \captionsetup{justification=centering}
  \begin{subfigure}[b]{0.32\textwidth}
    \includegraphics[width=\textwidth]{%
      plots/accuracy_Genz1_dim2_equid} 
    \caption{$\Omega = [0,1]^2$, equidistant points}
    \label{fig:accuracy_Genz1_dim2_equid}
  \end{subfigure}%
  ~
  \begin{subfigure}[b]{0.32\textwidth}
    \includegraphics[width=\textwidth]{%
      plots/accuracy_Genz1_dim2_uniform} 
    \caption{$\Omega = [0,1]^2$, random points}
    \label{fig:accuracy_Genz1_dim2_uniform}
  \end{subfigure}%
  ~
  \begin{subfigure}[b]{0.32\textwidth}
    \includegraphics[width=\textwidth]{%
      plots/accuracy_Genz1_dim2_Halton} 
    \caption{$\Omega = [0,1]^2$, Halton points}
    \label{fig:accuracy_Genz1_dim2_Halton}
  \end{subfigure}%
  \\
  \begin{subfigure}[b]{0.32\textwidth}
    \includegraphics[width=\textwidth]{%
      plots/accuracy_Genz1_dim3_equid} 
    \caption{$\Omega = [0,1]^3$, equidistant points}
    \label{fig:accuracy_Genz1_dim3_equid}
  \end{subfigure}%
  ~
  \begin{subfigure}[b]{0.32\textwidth}
    \includegraphics[width=\textwidth]{%
      plots/accuracy_Genz1_dim3_uniform} 
    \caption{$\Omega = [0,1]^3$, random points}
    \label{fig:accuracy_Genz1_dim3_uniform}
  \end{subfigure}%
  ~
  \begin{subfigure}[b]{0.32\textwidth}
    \includegraphics[width=\textwidth]{%
      plots/accuracy_Genz1_dim3_Halton} 
    \caption{$\Omega = [0,1]^3$, Halton points}
    \label{fig:accuracy_Genz1_dim3_Halton}
  \end{subfigure}%
  \caption{Errors for the first Genz function $g_1$ (oscillatory) as in \eqref{eq:Genz}.}
  \label{fig:Genz1}
\end{figure}

\begin{figure}[!htb]
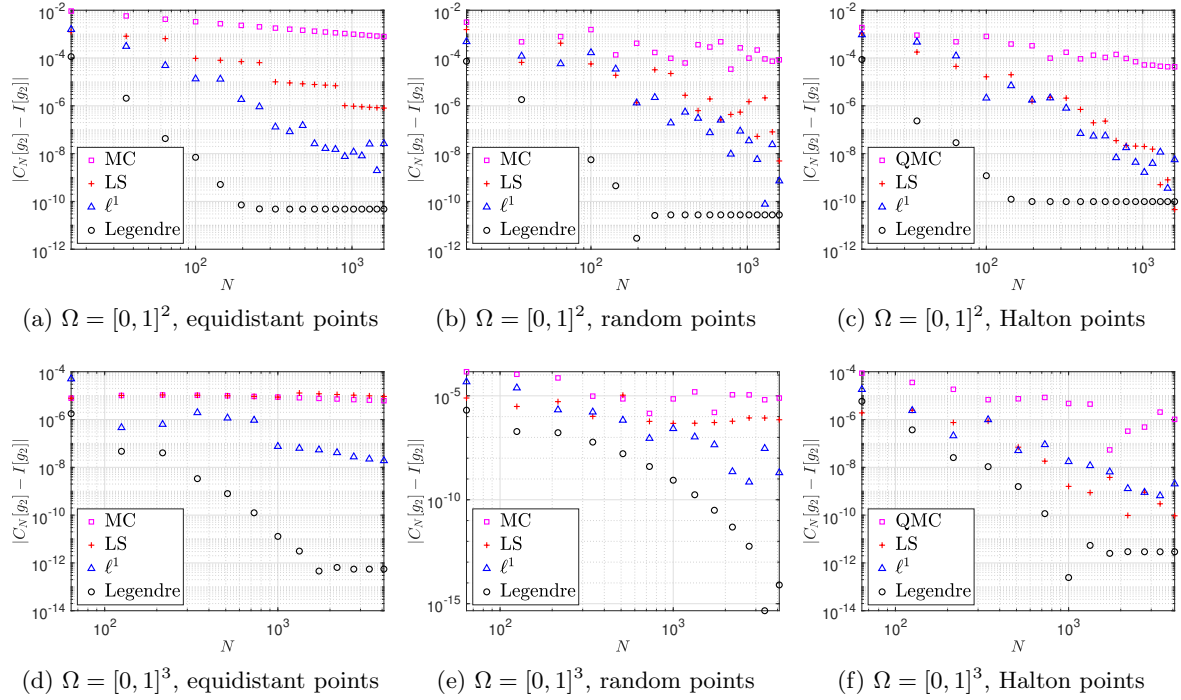

  \centering 
  \captionsetup{justification=centering}
  \begin{subfigure}[b]{0.32\textwidth}
    \includegraphics[width=\textwidth]{%
      plots/accuracy_Genz2_dim2_equid} 
    \caption{$\Omega = [0,1]^2$, equidistant points}
    \label{fig:accuracy_Genz2_dim2_equid}
  \end{subfigure}%
  ~
  \begin{subfigure}[b]{0.32\textwidth}
    \includegraphics[width=\textwidth]{%
      plots/accuracy_Genz2_dim2_uniform} 
    \caption{$\Omega = [0,1]^2$, random points}
    \label{fig:accuracy_Genz2_dim2_uniform}
  \end{subfigure}%
  ~
  \begin{subfigure}[b]{0.32\textwidth}
    \includegraphics[width=\textwidth]{%
      plots/accuracy_Genz2_dim2_Halton} 
    \caption{$\Omega = [0,1]^2$, Halton points}
    \label{fig:accuracy_Genz2_dim2_Halton}
  \end{subfigure}%
  \\
  \begin{subfigure}[b]{0.32\textwidth}
    \includegraphics[width=\textwidth]{%
      plots/accuracy_Genz2_dim3_equid} 
    \caption{$\Omega = [0,1]^3$, equidistant points}
    \label{fig:accuracy_Genz2_dim3_equid}
  \end{subfigure}%
  ~
  \begin{subfigure}[b]{0.32\textwidth}
    \includegraphics[width=\textwidth]{%
      plots/accuracy_Genz2_dim3_uniform} 
    \caption{$\Omega = [0,1]^3$, random points}
    \label{fig:accuracy_Genz2_dim3_uniform}
  \end{subfigure}%
  ~
  \begin{subfigure}[b]{0.32\textwidth}
    \includegraphics[width=\textwidth]{%
      plots/accuracy_Genz2_dim3_Halton} 
    \caption{$\Omega = [0,1]^3$, Halton points}
    \label{fig:accuracy_Genz2_dim3_Halton}
  \end{subfigure}%
  \caption{Errors for the second Genz function $g_2$ (product peak) as in \eqref{eq:Genz}.}
  \label{fig:Genz2}
\end{figure}

\begin{figure}[tb]
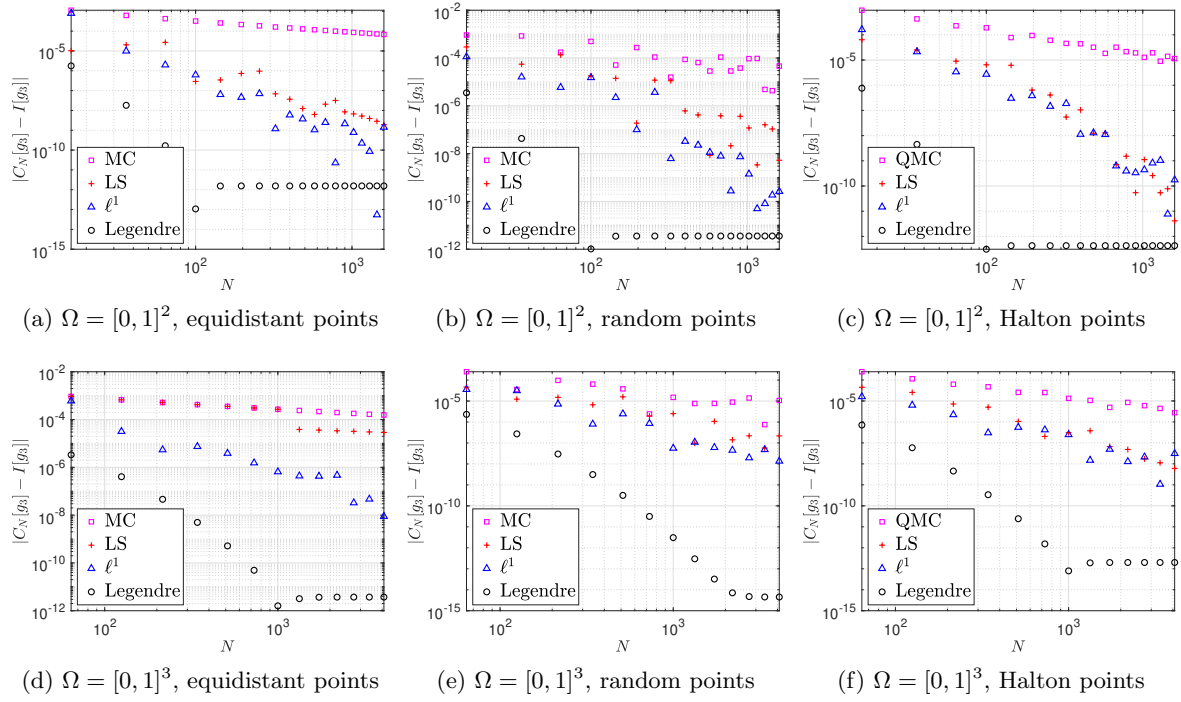

  \centering 
  \captionsetup{justification=centering}
  \begin{subfigure}[b]{0.32\textwidth}
    \includegraphics[width=\textwidth]{%
      plots/accuracy_Genz3_dim2_equid} 
    \caption{$\Omega = [0,1]^2$, equidistant points}
    \label{fig:accuracy_Genz3_dim2_equid}
  \end{subfigure}%
  ~
  \begin{subfigure}[b]{0.32\textwidth}
    \includegraphics[width=\textwidth]{%
      plots/accuracy_Genz3_dim2_uniform} 
    \caption{$\Omega = [0,1]^2$, random points}
    \label{fig:accuracy_Genz3_dim2_uniform}
  \end{subfigure}%
  ~
  \begin{subfigure}[b]{0.32\textwidth}
    \includegraphics[width=\textwidth]{%
      plots/accuracy_Genz3_dim2_Halton} 
    \caption{$\Omega = [0,1]^2$, Halton points}
    \label{fig:accuracy_Genz3_dim2_Halton}
  \end{subfigure}%
  \\
  \begin{subfigure}[b]{0.32\textwidth}
    \includegraphics[width=\textwidth]{%
      plots/accuracy_Genz3_dim3_equid} 
    \caption{$\Omega = [0,1]^3$, equidistant points}
    \label{fig:accuracy_Genz3_dim3_equid}
  \end{subfigure}%
  ~
  \begin{subfigure}[b]{0.32\textwidth}
    \includegraphics[width=\textwidth]{%
      plots/accuracy_Genz3_dim3_uniform} 
    \caption{$\Omega = [0,1]^3$, random points}
    \label{fig:accuracy_Genz3_dim3_uniform}
  \end{subfigure}%
  ~
  \begin{subfigure}[b]{0.32\textwidth}
    \includegraphics[width=\textwidth]{%
      plots/accuracy_Genz3_dim3_Halton} 
    \caption{$\Omega = [0,1]^3$, Halton points}
    \label{fig:accuracy_Genz3_dim3_Halton}
  \end{subfigure}%
  \caption{Errors for the third Genz function $g_3$ (corner peak) as in \eqref{eq:Genz}.}
  \label{fig:Genz3}
\end{figure}

\begin{figure}[tb]
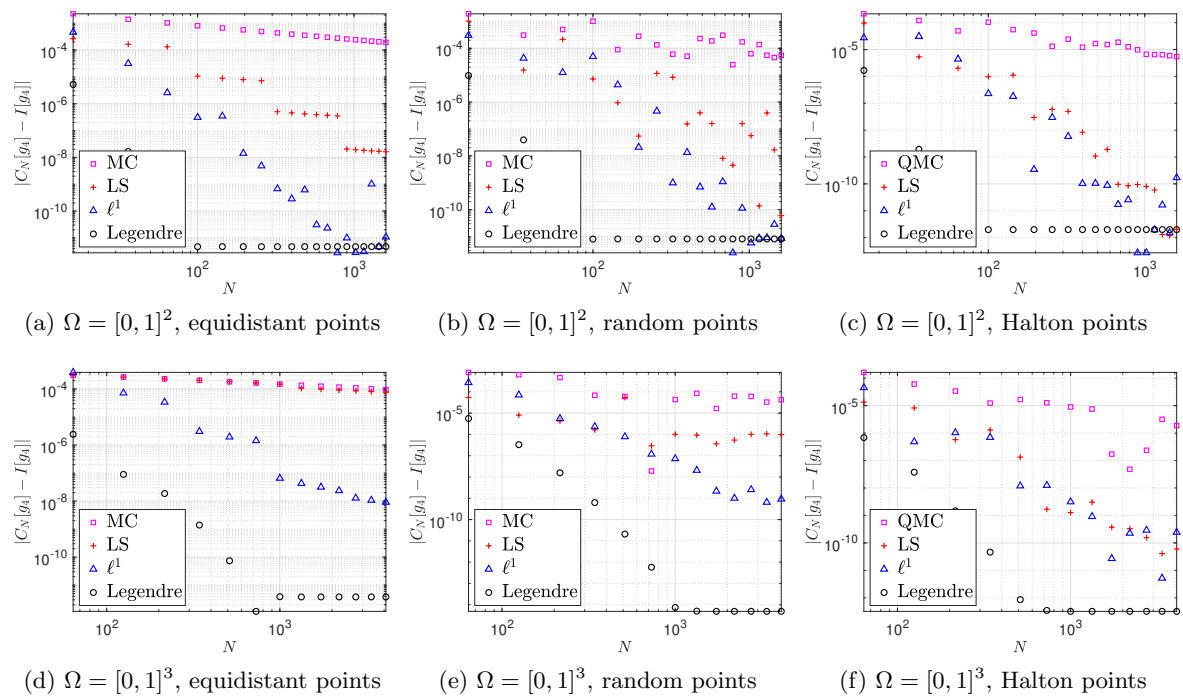

  \centering 
  \captionsetup{justification=centering}
  \begin{subfigure}[b]{0.32\textwidth}
    \includegraphics[width=\textwidth]{%
      plots/accuracy_Genz4_dim2_equid} 
    \caption{$\Omega = [0,1]^2$, equidistant points}
    \label{fig:accuracy_Genz4_dim2_equid}
  \end{subfigure}%
  ~
  \begin{subfigure}[b]{0.32\textwidth}
    \includegraphics[width=\textwidth]{%
      plots/accuracy_Genz4_dim2_uniform} 
    \caption{$\Omega = [0,1]^2$, random points}
    \label{fig:accuracy_Genz4_dim2_uniform}
  \end{subfigure}%
  ~
  \begin{subfigure}[b]{0.32\textwidth}
    \includegraphics[width=\textwidth]{%
      plots/accuracy_Genz4_dim2_Halton} 
    \caption{$\Omega = [0,1]^2$, Halton points}
    \label{fig:accuracy_Genz4_dim2_Halton}
  \end{subfigure}%
  \\
  \begin{subfigure}[b]{0.32\textwidth}
    \includegraphics[width=\textwidth]{%
      plots/accuracy_Genz4_dim3_equid} 
    \caption{$\Omega = [0,1]^3$, equidistant points}
    \label{fig:accuracy_Genz4_dim3_equid}
  \end{subfigure}%
  ~
  \begin{subfigure}[b]{0.32\textwidth}
    \includegraphics[width=\textwidth]{%
      plots/accuracy_Genz4_dim3_uniform} 
    \caption{$\Omega = [0,1]^3$, random points}
    \label{fig:accuracy_Genz4_dim3_uniform}
  \end{subfigure}%
  ~
  \begin{subfigure}[b]{0.32\textwidth}
    \includegraphics[width=\textwidth]{%
      plots/accuracy_Genz4_dim3_Halton} 
    \caption{$\Omega = [0,1]^3$, Halton points}
    \label{fig:accuracy_Genz4_dim3_Halton}
  \end{subfigure}%
  \caption{Errors for the fourth Genz function $g_4$ (Gaussian) as in \eqref{eq:Genz}.}
  \label{fig:Genz4}
\end{figure}

Here, these are chosen randomly.
For each case, the experiment was repeated $50$ times. 
At the same time, for each experiment, the vectors $\mathbf{a}$ and $\mathbf{b}$ were drawn randomly from $[0,1]^q$ and $\mathbf{a}$ was subsequently scaled such that $\|\mathbf{a}\| = 5/2$.

Figures \ref{fig:Genz1}, \ref{fig:Genz2}, \ref{fig:Genz3} and \ref{fig:Genz4}, report the averaged errors. 
It can be observed that in almost all cases the $\ell^1$- and LS-CFs yield more accurate results than the (quasi-)MC method applied to the same set of data points. 
Again, the product Legendre rule is reported only to provide a reference. 
It is not applied to the same set of data points.

\subsection{A Nonstandard Domain} 

We complete our numerical investigation by considering a nonstandard domain $\Omega = B_2 \cup [1,2]^2$. 
That is, $\Omega$ consists of a unit circle and a translated unit cube. 
See Figure \ref{fig:nonstandard_domian} for an illustration. 
Note that the volume of this domain is given by $|\Omega| = \pi + 1$. 

\begin{figure}[tb]
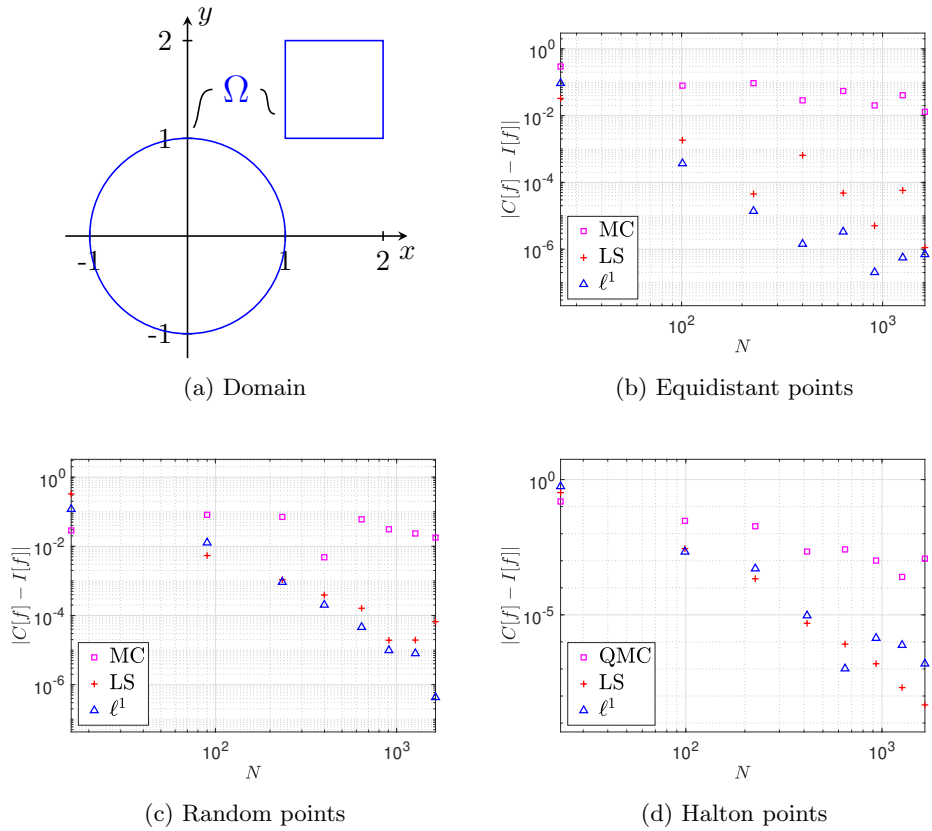

\centering
	\begin{subfigure}[b]{0.4\textwidth}
		\begin{center}
  		\begin{tikzpicture}[domain = -2.5:4.5, scale=.65, line width=0.6pt]
			\draw[->,>=stealth] (-2.5,0) -- (4.5,0) node[below] {$x$};
    			\draw[->,>=stealth] (0,-2.5) -- (0,4.5) node[right] {$y$};
			\draw (-2,0.1) -- (-2,-0.1) node [below] {-1};
			\draw (2,0.1) -- (2,-0.1) node [below] {1};
			\draw (4,0.1) -- (4,-0.1) node [below] {2};
			\draw (-0.1,-2) -- (0.1,-2) node [left] {-1 \ };
			\draw (-0.1,2) -- (0.1,2) node [left] {1 \ };
			\draw (-0.1,4) -- (0.1,4) node [left] {2 \ };

			\draw[blue]  (0,0) circle [radius = 2]; 
			\draw[blue] (2,2) rectangle (4,4);
			\draw (0,2) node (A) {}; 
			\draw (2,2.5) node (B) {}; 
			\draw[blue] (1,3) node (C) {\Large $\Omega$};
			\draw (A) to[out=60, in=180] (C);
			\draw (B) to[out=180, in=0] (C);

		\end{tikzpicture}
		\end{center}
		\caption{Domain}
		\label{fig:nonstandard_domian}
	\end{subfigure}%
	~
  	\begin{subfigure}[b]{0.4\textwidth}
    		\includegraphics[width=\textwidth]{%
      		plots/nonstandard_equid} 
    		\caption{Equidistant points}
    		\label{ig:nonstandard_domian_equid}
 	\end{subfigure}%
  	\\
  	\begin{subfigure}[b]{0.4\textwidth}
    		\includegraphics[width=\textwidth]{%
      		plots/nonstandard_uniform} 
    		\caption{Random points}
    		\label{fig:nonstandard_domian_random}
  	\end{subfigure}%
  	~
  	\begin{subfigure}[b]{0.4\textwidth}
    		\includegraphics[width=\textwidth]{%
      		plots/nonstandard_Halton} 
    		\caption{Halton points}
    		\label{ig:nonstandard_domian_Halton}
  	\end{subfigure}%
  	\caption{A two dimensional nonstandard domain and errors for $\omega \equiv 1$ and $f(x_1,x_2) = \exp( -x_1^2 - x_2^2 )$}
	\label{fig:nonstandard}
\end{figure}

Here, we equip $\Omega$ with $\omega \equiv 1$ and consider the test function $f(x_1,x_2) = \exp( -x_1^2 - x_2^2 )$. 
The monomial's moments can be computed exactly based on the formulas presented in \ref{sec:moments}. 
The results for the $\ell^1$- and LS-CF as well as the (quasi-)MC are reported in Figure \ref{fig:nonstandard}. 
Again, equidistant, random, and Halton points are considered. 
These were first generated in the larger cube $[-2,2]^2$ and the data points are given by the subset of points falling into the domain $\Omega$. 
We can observe that the $\ell^1$- and LS-CF can provide accurate results also for this nonstandard domain.
Finally, it should be stressed that for the results presented here \emph{no} domain decomposition was performed. 
That is, rather than constructing and adding up separate formulas for the two subdomains, the domain $\Omega$ was considered as a whole when computing the weights for the $\ell^1$- and LS-CFs. 
While it might be computationally more efficient to consider compound CFs (since these can be computed in parallel), this might reduce the DoE and therefore accuracy.  
\section{Summary} 
\label{sec:summary} 

In this work, CFs for experimental data (not fitting a known CF) were considered. 
In particular, we developed novel $\ell^1$- and LS-CFs.
Both of these are---by construction---ensured to be stable (in the sense of nonnegative only cubature weights) while (potentially) being able to achieve high DoE. 
The idea behind these is to allow the number of data points $N$ to be larger than the number of basis functions $K$ for which the desired CF is exact. 
This yielded the linear system corresponding to the exactness conditions to become underdetermined. 
Hence, an $(N-K)$-dimensional affine linear space of solutions $W$ was induced. 
Then, from this space, cubature weights were selected that minimize certain norms corresponding to stability of the CF. 
Here, we investigated two options: 
(1) Minimization w.\,r.\,t.\ the $1$-norm, yielding so-called $\ell^1$-CFs.  
(2) Minimization w.\,r.\,t.\ a weighted $2$-norm, resulting in so-called LS-CFs. 
These CFs were developed for a predefined set of data points. 
Only half of the degrees of freedom could therefore be used for optimization of these CFs compared to many other CFs. 
We still observed the LS- and $\ell^1$-CFs to yield accurate numerical results in a variety of different test cases. 

Future work will include the extension of the stable high-order CFs developed here to non-polynomial function spaces. 
That is, instead of requiring them to be exact for polynomials up to a certain degree, these should be exact for a given finite-dimensional function space (not necessarily consisting of polynomials). 
A first step in this direction has recently been provided in \cite{glaubitz2021construction}. 
However, in this work, it was still assumed that the function space at least included constants. 
Unfortunately, this is not always the case and, for instance, radial basis function spaces might not include constants. 
That said, in a forthcoming work \cite{glaubitz2021towards} we were still able to adapt some of the approaches to prove stability in the context of radial basis function based CFs. 

\appendix 
\section{Moments of the Monomials}
\label{sec:moments}

For the cube, the moments of the one-dimensional monomials, $I[x^k]$, are easy to compute for all cases: 
\begin{equation} 
\begin{aligned} 
	C_1, \omega \equiv 1: \quad  
		& I[ x^k ] = \tilde{m}_k :=
		\begin{cases}
			\ \ 0 & \text{if $k$ is odd}, \\ 
			\frac{2}{k+1} & \text{otherwise},
		\end{cases} \\ 
	C_1, \omega(x) = \sqrt{1 - x^2}: \quad 
		& I[ x^k] = \hat{m}_k :=
		\begin{cases}
			\qquad \quad 0 & \text{if $k$ is odd}, \\ 
			\qquad \quad \frac{\pi}{2} & \text{if $k=0$}, \\
			\frac{(k-1)}{(k+2)} \hat{m}_{k-2} & \text{otherwise}.
		\end{cases} 
\end{aligned} 
\end{equation} 
The moments of the higher-dimensional monomials, $I[\boldsymbol{x}^\mathbf{k}]$ with multi-index $\mathbf{k} = (k_1,\dots,k_q)$, are respectively given by 
\begin{equation} 
	I[\boldsymbol{x}^\mathbf{k}] = \tilde{m}_{k_1} \dots \tilde{m}_{k_q}, \quad 
	I[\boldsymbol{x}^\mathbf{k}] = \hat{m}_{k_1} \dots \hat{m}_{k_q}.
\end{equation}
For the ball, on the other hand, these are given by 
\begin{equation} 
\begin{aligned} 
	B_q, \omega \equiv 1: \quad  
		& I[\boldsymbol{x}^\mathbf{k}] = 
		\frac{2}{k_1 + \dots + k_q + q}
		\begin{cases}
			\qquad 0 & \text{if some $k_i$ is odd}, \\ 
			\frac{\Gamma(\beta_1) \dots \Gamma(\beta_q)}{\Gamma(\beta_1 + \dots + \beta_q)} & \text{otherwise},
		\end{cases} \\ 
	B_q, \omega(\boldsymbol{x}) = \sqrt{ \|\boldsymbol{x}\|_2}: \quad 
		& I[\boldsymbol{x}^\mathbf{k}] = 
		\frac{2}{k_1 + \dots + k_q + q + \frac{1}{2}}
		\begin{cases}
			\qquad 0 & \text{if some $k_i$ is odd}, \\ 
			\frac{\Gamma(\beta_1) \dots \Gamma(\beta_q)}{\Gamma(\beta_1 + \dots + \beta_q)} & \text{otherwise},
		\end{cases} 
\end{aligned} 
\end{equation} 
where $\beta_i = \frac{1}{2}(k_i+1)$; see \cite{folland2001integrate}. 

\section*{Acknowledgements}
The author would like to thank Alina Glaubitz, Dorian Hillebrand, and Simon-Christian Klein as well as the anonymous referees for helpful advice. 

\bibliographystyle{siamplain}
\bibliography{literature}

\end{document}